\newtheorem{theorem}{Theorem}[section]
\newtheorem{proposition}[theorem]{Proposition}
\newtheorem{lemma}[theorem]{Lemma}
\newtheorem{definition}[theorem]{Definition}
\newtheorem{assumption}[theorem]{Assumption}
\theoremstyle{remark}
\newtheorem{remark}[theorem]{Remark}
\newtheorem{example}[theorem]{Example}
\DeclareMathOperator{\Dom}{Dom}
\DeclareMathOperator{\Id}{Id}
\DeclareMathOperator{\spn}{span}
\newcommand{\norm}[1]{\left\|#1\right\|}
\newcommand{\inner}[2]{\langle #1, #2 \rangle}
\newcommand{\HH}{\mathcal{H}}
\newcommand{\FF}{\mathcal{F}}
\newcommand{\AAA}{\mathcal{A}}
\title{An Operator It\^o Formula for Volterra Gaussian Processes:\\
The Intrinsic Bracket via Causal Derivation--Divergence Factorization}
\author{Ramiro Fontes\\
Quijotic Research\\
\texttt{ramirofontes@gmail.com}}
\date{February 2026}
\begin{document}
\maketitle

\begin{abstract}
We develop an It\^o-type change-of-variables formula for Volterra Gaussian processes (including fractional Brownian motion with any Hurst parameter), based on the operator factorization framework of \cite{FonI,FonII}. The It\^o correction is expressed as a Stieltjes integral against the \emph{energy function} $\Gamma^X(t) := \norm{\Pi DX_t}_\HH^2$, which equals $\mathbb{E}[X_t^2]$ for centered Gaussian processes. The correction emerges from the non-commutativity of the predictable projection $\Pi$ with nonlinear functions---a structural mechanism identified by the operator framework---and is computed via the Gaussian conditional expectation structure following Decreusefond--\"Ust\"unel \cite{DU}.

We prove three results beyond the formula itself. First, $d\Gamma^X$ is the \emph{unique} second-order correction measure compatible with the operator factorization (Theorem~\ref{thm:characterization}). Second, under a fixed driving martingale, the intrinsic bracket is invariant under changes of Volterra kernel representation (Proposition~\ref{prop:fixeddriver}). Third, the bracket is stable under $L^2$ kernel approximation (Proposition~\ref{prop:approx}), giving analytic content to Markovian approximation schemes.

We restrict attention to first-order change-of-variables formulas; higher-order rough dynamics (requiring iterated integrals or L\'evy areas) are not addressed. The proof relies on Gaussianity for the conditional expectation computations; for non-Gaussian processes admitting a $(D,\delta,\Pi)$ triple, the formula statement extends as a conjecture.

\medskip\noindent
\textbf{MSC 2020:} 60H07, 60H05, 60G15, 60G22.

\medskip\noindent
\textbf{Keywords:} It\^o formula; operator factorization; Volterra Gaussian processes; fractional Brownian motion; Skorokhod integral; divergence operators; intrinsic bracket.
\end{abstract}

\section{Introduction}

\subsection{The primitive object in stochastic calculus}

It\^o's formula is traditionally derived from two ingredients: semimartingale decompositions and pathwise quadratic variation. The correction term $\tfrac{1}{2}\int_0^t \varphi''(Y_s)\,d[Y,Y]_s$ appears as a consequence of second-order Taylor expansion combined with the nontrivial pathwise accumulation $[Y,Y]_t$.

This paper proves that, within the operator factorization framework, this traditional picture can be \emph{inverted}. The It\^o correction term emerges directly from the operator factorization
\begin{equation}\label{eq:factorization}
(\Id - \mathbb{E}) = \delta\Pi D,
\end{equation}
where $D$ is a derivation (adjoint to the stochastic integral $\delta$) and $\Pi$ is predictable projection. In this framework, the correction term arises without invoking pathwise quadratic variation. When the intrinsic bracket coincides with classical quadratic variation, classical It\^o calculus is recovered.

\begin{remark}[Scope of claims]\label{rem:scope}
We do not claim that pathwise quadratic variation is ``wrong'' or unnecessary in general. Rather, we show that within the operator framework, the It\^o correction can be derived without assuming its existence. For semimartingales, both approaches yield the same formula. The operator approach extends to rough processes where $[Y,Y]_t$ is undefined pathwise.
\end{remark}

\subsection{Fresh-noise energy as the second-order object}\label{sec:novelty}

The central claim of this paper is not merely that an It\^o formula can be proved in an operator setting---such results exist in the Malliavin calculus literature (see \S\ref{sec:contrast} for detailed comparisons). Rather, we identify the \emph{energy function}
\[
\Gamma^Y(t) := \norm{\Pi DY_t}_\HH^2
\]
(where $(D,\delta,\Pi)$ is the causal derivation--divergence triple defined in Section~\ref{sec:framework} and $\HH$ is the associated Hilbert space)
as generating the unique second-order correction measure compatible with the factorization~\eqref{eq:factorization} (Theorem~\ref{thm:characterization}), and we prove that the intrinsic bracket $\langle Y\rangle^{(D,\Pi)}_t = \Gamma^Y(t)$ is invariant under changes of kernel representation when the driving martingale is fixed (Proposition~\ref{prop:fixeddriver}). Three structural properties distinguish $\Gamma^Y$ from classical quadratic variation and pathwise constructions:

\begin{enumerate}[label=(\roman*)]
\item \textbf{Representation-definedness.} $\Gamma^Y$ is defined in terms of the Hilbert space norm $\norm{\cdot}_\HH$ and the predictable projection $\Pi$. It requires no pathwise regularity of $Y$ beyond $L^2(\Omega)$-integrability of $\Pi DY$.

\item \textbf{Innovation-locality.} The integrand $\norm{\Pi DY_s}_\HH^2$ depends only on the \emph{predictable} (adapted) component of the Malliavin derivative---the ``fresh noise'' at time $s$. The anticipative component $(\Id-\Pi)DY_s$ does not contribute. This is a structural consequence of the orthogonal projection identity (Lemma~\ref{lem:ortho}).

\item \textbf{Approximation-stability.} If $K_n \to K$ in $L^2(\mu)$ (as in exponential-sum approximations of Volterra kernels), then the corresponding intrinsic brackets converge: $\langle Y^{(n)}\rangle^{(D,\Pi)}_t \to \langle Y\rangle^{(D,\Pi)}_t$ in $L^1(\Omega)$. See Proposition~\ref{prop:approx}.
\end{enumerate}

The uniqueness of $d\Gamma^Y$ as the correction measure is formalized in Theorem~\ref{thm:characterization}.

\subsection{Comparison with existing approaches}\label{sec:contrast}

To delineate the contribution, we contrast the operator It\^o formula with four existing frameworks.

\medskip
\noindent\textbf{(A) Classical It\^o calculus (semimartingale theory).}
When $Y$ is a semimartingale, $d\Gamma^Y_s = d[Y,Y]_s$ in the appropriate sense, and the operator formula specializes to the classical formula. The operator approach provides no advantage in this regime. The value of Section~\ref{sec:volterra} (Volterra verification) is \emph{not} a new decomposition, but rather the demonstration that $\langle Y\rangle^{(D,\Pi)}$ reduces to $[Y,Y]$ via explicit Hilbert space computation. This serves as a structural bridge, not an independent result.

\medskip
\noindent\textbf{(B) Russo--Vallois symmetric integrals.}
The Russo--Vallois framework \cite{RussoVallois} defines a generalized quadratic variation $[Y,Y]^{(\text{RV})}_t := \lim_{\varepsilon\to 0}\frac{1}{\varepsilon}\int_0^t (Y_{s+\varepsilon}-Y_s)^2\,ds$ and derives It\^o-type formulas for processes with finite Russo--Vallois variation, including fractional Brownian motion with $H > 1/6$. The key distinction is:
\begin{itemize}
\item Russo--Vallois is \emph{pathwise}: the bracket is defined trajectory-by-trajectory via regularization limits.
\item The intrinsic bracket $\langle Y\rangle^{(D,\Pi)}$ is \emph{operator-theoretic}: it is a Hilbert space energy, defined in $L^2(\Omega)$ without pathwise regularization.
\end{itemize}
For fractional Brownian motion with $H > 1/4$, both approaches yield the same correction term $t^{2H}$, but by fundamentally different mechanisms. For $H \leq 1/4$, the Russo--Vallois bracket may fail to exist pathwise, while $\langle Y\rangle^{(D,\Pi)}_t = t^{2H}$ remains well-defined in $L^2$.

\medskip
\noindent\textbf{(C) Young integration.}
For processes with $H > 1/2$, Young's theory provides pathwise integration without It\^o correction. In this regime, the operator framework produces a Skorokhod-type formula with a nonzero correction term (for fBM, $\langle B^H\rangle^{(D,\Pi)}_t = t^{2H}$), reflecting the distinction between Skorokhod and pathwise integrals. The operator framework does not compete with Young integration in this regime; its natural domain is $H\leq 1/2$.

\medskip
\noindent\textbf{(D) Classical Volterra It\^o formulae.}
Decreusefond and \"Ust\"unel \cite{DU} established It\^o formulas for Volterra Gaussian processes using Skorokhod integrals and Malliavin calculus. Al\`os, Mazet, and Nualart \cite{AMN} extended these to fractional Brownian motion with $H < 1/2$. Our formula (Theorem~\ref{thm:main}) recovers the same expressions---see the exact verification in Section~\ref{sec:example}. The proof of Theorem~\ref{thm:main} follows the Clark--Ocone strategy of~\cite{DU}, adapted to the operator language. The distinction is structural, not analytical:
\begin{itemize}
\item The \emph{formula statement} is expressed in terms of the abstract triple $(D,\delta,\Pi)$ and the energy function $\Gamma^Y$. The Volterra--Gaussian setting is one concrete realization (Section~\ref{sec:volterra}).
\item The \emph{proof mechanism} uses Gaussian conditional expectations, following~\cite{DU}. The operator framework identifies the source of the correction (non-commutativity of $\Pi$ with nonlinear functions) but does not bypass the need for Gaussian structure in the proof.
\end{itemize}

\medskip
\noindent\textbf{(E) Scope of the contribution.}
For Volterra Gaussian processes, the operator It\^o formula (Theorem~\ref{thm:main}) recovers the same correction terms as \cite{DU,AMN}---this is verified in Section~\ref{sec:example}. The formula values are not new for this class. The contributions of this paper are:
\begin{itemize}
\item The identification of the correction as a \emph{Stieltjes integral against the energy function} $\Gamma^Y(t) = \norm{\Pi DY_t}_\HH^2$, and of its source as the non-commutativity of the predictable projection $\Pi$ with nonlinear functions of $Y_t$.
\item The intrinsic bracket $\langle Y\rangle^{(D,\Pi)}$ does not assume finite pathwise quadratic variation. Russo--Vallois requires $[Y,Y]^{(\text{RV})}_t < \infty$; F\"ollmer's pathwise It\^o formula \cite{Follmer} requires existence of pathwise quadratic variation along a sequence of partitions. The operator formula requires only $\Pi DY\in L^2([0,T]\times\Omega;\HH)$, which is an $L^2(\Omega)$ condition, not a pathwise one.
\item The uniqueness of the correction measure (Theorem~\ref{thm:characterization}) and representation invariance under a fixed driver (Proposition~\ref{prop:fixeddriver}) appear to be new in this context.
\end{itemize}
The proof relies on Gaussianity for the conditional expectation computations (see Remark~\ref{rem:proofscope}). For non-Gaussian processes, the abstract formula provides a conjecture whose verification would require establishing the necessary conditional moment estimates for each process class.

\subsection{Main results}

This paper completes a trilogy \cite{FonI,FonII} by developing the It\^o formula from the operator framework.

\begin{enumerate}[label=\arabic*.]
\item \textbf{Theorem~\ref{thm:main} (Operator It\^o formula):} For a centered Volterra Gaussian process $X$ with energy function $\Gamma^X(s) := \norm{\Pi DX_s}_\HH^2 = \mathbb{E}[X_s^2]$, the change-of-variables formula
\[
\varphi(X_t) = \varphi(0) + \delta(\varphi'(X)\Pi DX\cdot\mathbf{1}_{[0,t]}) + \frac{1}{2}\int_0^t \varphi''(X_s)\,d\Gamma^X(s)
\]
holds without assuming semimartingale structure or finite pathwise quadratic variation.

\item \textbf{Theorem~\ref{thm:characterization} (Characterization of the correction):} The measure $d\Gamma^X$ is the unique second-order correction compatible with the factorization~\eqref{eq:factorization}, in the sense made precise in \S\ref{sec:characterization}.

\item \textbf{Proposition~\ref{prop:fixeddriver} (Representation invariance):} Under a fixed driving martingale, the intrinsic bracket is invariant under changes of Volterra kernel. Under different drivers, the bracket may differ, but the Hurst scaling exponent is always invariant (Proposition~\ref{prop:Hurstinvariant}).

\item \textbf{Proposition~\ref{prop:voltverify} (Volterra Gaussian verification):} For fractional Brownian motion with $H < \tfrac{1}{2}$ and general Volterra processes satisfying Decreusefond--\"Ust\"unel conditions, the triple $(D^X,\delta^X,\Pi^X)$ satisfies the operator axioms. The resulting formulas match classical Skorokhod integral results exactly (Section~\ref{sec:example}).
\end{enumerate}

\subsection{Restriction to first-order calculus}

\begin{remark}[First-order restriction]\label{rem:firstorder}
We restrict attention to first-order change-of-variables formulas throughout this paper. For rough differential equations in the regime $H \leq \tfrac{1}{3}$, second-order terms (L\'evy areas, iterated integrals) become necessary, and classical rough path theory \cite{Hairer} provides the appropriate framework. The operator approach presented here handles first-order calculus without these constructions; extending to higher orders is an open problem (see Section~\ref{sec:open}).
\end{remark}

\subsection{Technical contributions}

Beyond the conceptual reframing, this paper provides:

\begin{enumerate}[label=\arabic*.]
\item \textbf{Product rule with sharp domain conditions (Section~\ref{sec:product}):} The divergence product rule $\delta(Fu) = F\delta(u) - \inner{DF}{u}_\HH$ is a key algebraic tool in the framework. We characterize precisely when $Fu \in \Dom(\delta)$.

\item \textbf{Clark--Ocone proof with marginal density argument (Section~\ref{sec:proof}):} The It\^o formula is established via the Clark--Ocone factorization applied to $\varphi(X_t)$. The expectation computation uses the Gaussian marginal density and the heat equation, completely bypassing pathwise increments and partition sums. The proof identifies where Gaussianity enters and what remains valid abstractly.

\item \textbf{Skorokhod isometry for Volterra Gaussian (Proposition~\ref{prop:isomVG}):} For Volterra Gaussian processes satisfying Decreusefond--\"Ust\"unel conditions, we state the Skorokhod isometry with trace correction, following~\cite{DU}. This is not used in the proof of Theorem~\ref{thm:main} (which bypasses it via the marginal density argument) but provides context for the operator framework.

\item \textbf{Characterization theorem (Theorem~\ref{thm:characterization}):} We prove that $d\Gamma^X$ is the unique second-order correction arising from the operator factorization.

\item \textbf{Representation dependence analysis (Section~\ref{sec:repdep}):} We analyze when the intrinsic bracket depends on the choice of Volterra representation, and provide sufficient conditions for representation-independence.

\item \textbf{Explicit worked example (Section~\ref{sec:example}):} We compute $(B^{1/4}_t)^2$ step-by-step, verifying exact agreement with \cite{DU}.
\end{enumerate}

\subsection{Structure of this paper}

Section~\ref{sec:framework} recalls the operator framework. Section~\ref{sec:product} establishes the divergence product rule. Section~\ref{sec:proof} proves the main theorem. Section~\ref{sec:characterization} proves the characterization of the correction measure. Section~\ref{sec:multi} extends to vector-valued processes. Section~\ref{sec:volterra} verifies axioms for Volterra Gaussian processes. Section~\ref{sec:repdep} analyzes representation dependence. Section~\ref{sec:approx} establishes approximation stability. Section~\ref{sec:example} computes an explicit example. Section~\ref{sec:conclusion} discusses implications and open problems.

\section{The Operator Framework}\label{sec:framework}

We recall the operator factorization framework from \cite{FonI}, introducing only what is needed for the It\^o formula. The primary example is Volterra Gaussian processes (including fractional Brownian motion), for which the framework is verified in Section~\ref{sec:volterra}. The reader may wish to consult Example~\ref{ex:fBM} for concrete motivation.

\subsection{Causal derivation--divergence triples}

Fix a time horizon $T > 0$, a complete probability space $(\Omega,\FF,P)$ equipped with a filtration $(\FF_t)_{t\in[0,T]}$ satisfying the usual conditions, and a separable real Hilbert space $(\HH,\inner{\cdot}{\cdot}_\HH)$.

\begin{definition}[Causal derivation--divergence triple]\label{def:triple}
A \emph{causal derivation--divergence triple} is $(D,\delta,\Pi)$ where:
\begin{enumerate}[label=(\arabic*)]
\item $D: \Dom(D) \subset L^2(\Omega) \to L^2(\Omega;\HH)$ is a densely-defined closed linear operator (the \emph{derivation}) with $\mathbf{1}\in\Dom(D)$ and $D\mathbf{1}=0$.
\item $\delta: \Dom(\delta) \subset L^2(\Omega;\HH) \to L^2(\Omega)$ is the Hilbert space adjoint of $D$ (the \emph{divergence}):
\begin{equation}\label{eq:adjoint}
\mathbb{E}[F\delta(u)] = \mathbb{E}[\inner{DF}{u}_\HH], \qquad F \in \Dom(D),\; u \in \Dom(\delta).
\end{equation}
\item $\Pi: L^2(\Omega;\HH) \to L^2(\Omega;\HH)$ is a bounded projection (``causal'' or ``predictable'') with $\Pi^2 = \Pi$.
\end{enumerate}
\end{definition}

\begin{remark}[The Hilbert space $\HH$]\label{rem:H}
The Hilbert space $\HH$ in Definition~\ref{def:triple} is abstract. It need not be a reproducing kernel Hilbert space (RKHS), a Cameron--Martin space, or any specific construction. For Volterra Gaussian processes, $\HH$ is naturally realized as the Cameron--Martin space (Section~\ref{sec:volterra}), but this is one concrete instantiation of the abstract framework.
\end{remark}

\begin{assumption}[Chain rule core]\label{ass:chain}
There exists a subalgebra $\AAA \subset \Dom(D)$ dense in $L^2(\Omega)$ and stable under multiplication such that for $F\in\AAA$ and $\varphi\in C^1_b(\mathbb{R})$,
\begin{equation}\label{eq:chain}
D(\varphi(F)) = \varphi'(F)DF.
\end{equation}
For $\varphi\in C^1_b(\mathbb{R}^n)$ and $F=(F_1,\ldots,F_n)$ with $F_i\in\AAA$,
$D(\varphi(F)) = \sum_{i=1}^n \partial_i\varphi(F)\,DF_i$.
\end{assumption}

\begin{assumption}[Operator factorization]\label{ass:factor}
For $F\in\AAA$,
\begin{equation}\label{eq:factor}
(\Id - \mathbb{E})F = \delta(\Pi DF).
\end{equation}
\end{assumption}

\begin{remark}\label{rem:cornerstone}
Assumption~\ref{ass:factor} is the cornerstone of the operator approach. It states that fluctuations from the mean factor through derivation, predictable projection, and divergence. In \cite{FonI}, this was established for general square-integrable processes admitting a closed stochastic integral. In \cite{FonII}, it was verified for rough fractional Brownian motion.
\end{remark}

\begin{assumption}[Orthogonal projection]\label{ass:ortho}
The projection $\Pi$ is orthogonal on $L^2(\Omega;\HH)$, i.e., $\Pi = \Pi^*$ (self-adjoint).
\end{assumption}

\begin{assumption}[Energy regularity]\label{ass:energy}
The \emph{energy function} $\Gamma^Y(t) := \norm{\Pi DY_t}_\HH^2$ is of bounded variation on $[0,T]$, so that the Stieltjes integral $\int_0^t f(s)\,d\Gamma^Y(s)$ is well-defined for continuous $f$. We write $\langle Y\rangle^{(D,\Pi)}_t := \Gamma^Y(t)$ for the \emph{intrinsic bracket}.
\end{assumption}

\begin{remark}[Absolutely continuous case]\label{rem:abscts}
When $\Gamma^Y$ is absolutely continuous---i.e., $\Gamma^Y(t) = \Gamma^Y(0) + \int_0^t \dot{\Gamma}^Y(s)\,ds$ for some $\dot{\Gamma}^Y \in L^1([0,T])$---the Stieltjes integral reduces to a Lebesgue integral: $\int_0^t f(s)\,d\Gamma^Y(s) = \int_0^t f(s)\dot{\Gamma}^Y(s)\,ds$. For semimartingales driven by Brownian motion, $\Gamma^Y(t) = t$ and $\dot{\Gamma}^Y = 1$, recovering the classical formula. For fractional Brownian motion, $\Gamma^Y(t) = t^{2H}$ and $\dot{\Gamma}^Y(s) = 2Hs^{2H-1}$ (Lemma~\ref{lem:fBMbracket}).
\end{remark}

\begin{remark}[Scope of the energy regularity assumption]\label{rem:energyscope}
Assumption~\ref{ass:energy} is automatically satisfied for Volterra Gaussian processes with covariance $R(t,t) \in BV([0,T])$, since $\Gamma^Y(t) = R(t,t)$ (Proposition~\ref{prop:Hurstinvariant}). For the abstract framework, it must be verified case-by-case. We do \emph{not} assume $\Gamma^Y$ is absolutely continuous in general; singular correction measures are permitted.
\end{remark}

\begin{remark}[Scope of orthogonality assumption]\label{rem:orthoscope}
Assumption~\ref{ass:ortho} is essential for the identity $\inner{DF}{\Pi DF}_\HH = \norm{\Pi DF}_\HH^2$ (Lemma~\ref{lem:ortho}), which underlies the bracket computation. This assumption is natural for Gaussian filtrations, where predictable projection is orthogonal projection onto adapted subspaces. For general (non-Gaussian, non-Markovian) noise, the existence of such an orthogonal $\Pi$ satisfying the factorization~\eqref{eq:factor} is nontrivial and must be verified case-by-case. The intrinsic bracket $\langle Y\rangle^{(D,\Pi)}$ depends on the choice of $\Pi$; we do not claim $\Pi$ is canonical in full generality.
\end{remark}

\subsection{The intrinsic bracket}

\begin{definition}[Intrinsic bracket and energy function]\label{def:bracket}
Let $Y = (Y_t)_{t\in[0,T]}$ be such that $Y_t\in\Dom(D)$ for a.e.\ $t$ and $t\mapsto \Pi DY_t$ lies in $L^2([0,T]\times\Omega;\HH)$. Define the \emph{energy function}
\begin{equation}\label{eq:energy}
\Gamma^Y(t) := \norm{\Pi DY_t}_\HH^2
\end{equation}
and the \emph{intrinsic bracket}
\begin{equation}\label{eq:bracket}
\langle Y\rangle^{(D,\Pi)}_t := \Gamma^Y(t) = \norm{\Pi DY_t}_\HH^2.
\end{equation}
For centered Gaussian processes, $\Gamma^Y(t) = \mathbb{E}[Y_t^2]$.
\end{definition}

\begin{remark}[Intrinsic bracket vs.\ quadratic variation]\label{rem:bracketvsQV}
The intrinsic bracket $\langle Y\rangle^{(D,\Pi)}_t = \norm{\Pi DY_t}_\HH^2$ is defined purely in terms of Hilbert space energy. It is computable even when pathwise quadratic variation $[Y,Y]_t$ does not exist (e.g., for rough processes with $H < \tfrac{1}{2}$). When $Y$ is a semimartingale and the framework is specialized appropriately, $\langle Y\rangle^{(D,\Pi)}_t = [Y,Y]_t$. The It\^o correction is expressed as a Stieltjes integral $\int_0^t \varphi''(Y_s)\,d\langle Y\rangle^{(D,\Pi)}_s$, not as a Lebesgue integral against the bracket itself.
\end{remark}

\subsection{The $(D,\delta,\Pi)$-It\^o process class}

\begin{definition}[$(D,\delta,\Pi)$-It\^o process]\label{def:Ito}
A real process $Y = (Y_t)_{t\in[0,T]}$ is called a \emph{$(D,\delta,\Pi)$-It\^o process} if:
\begin{enumerate}[label=(\arabic*)]
\item $t\mapsto Y_t$ is $L^2(\Omega)$-continuous and $Y_t\in\Dom(D)$ for each $t\in[0,T]$.
\item There exists $a\in L^2([0,T]\times\Omega)$ (the \emph{drift}) such that for all $t\in[0,T]$,
\begin{equation}\label{eq:decomp}
Y_t = Y_0 + \int_0^t a_s\,ds + \delta(\Pi DY\cdot\mathbf{1}_{[0,t]}),
\end{equation}
where the equality holds in $L^2(\Omega)$.
\item The map $t\mapsto \Pi DY_t$ lies in $L^2([0,T]\times\Omega;\HH)$.
\end{enumerate}
\end{definition}

\begin{remark}\label{rem:notation}
The notation $\Pi DY\cdot\mathbf{1}_{[0,t]}$ means the process $(s,\omega)\mapsto (\Pi DY_s(\omega))\cdot\mathbf{1}_{[0,t]}(s)$ in $L^2([0,T]\times\Omega;\HH)$. The divergence $\delta$ acts on this as an $\HH$-valued integral.
\end{remark}

\subsection{Existence and scope of $(D,\delta,\Pi)$-It\^o processes}

\begin{remark}[Definition vs.\ existence]\label{rem:existence}
The decomposition~\eqref{eq:decomp} is a \emph{definition} of the class, not a theorem asserting existence. We do not claim that every $L^2$-continuous process admits this form. Rather:
\begin{enumerate}[label=(\alph*)]
\item For Volterra Gaussian processes (Section~\ref{sec:volterra}), existence of the decomposition follows from the Malliavin calculus construction.
\item For other processes, one must either verify the decomposition directly or work within a class where it is known to hold.
\end{enumerate}
\end{remark}

\begin{proposition}\label{prop:volterraIto}
Let $X$ be a Volterra Gaussian process satisfying the Decreusefond--\"Ust\"unel conditions (Definition~\ref{def:DU}). Then $X$ is a $(D^X,\delta^X,\Pi^X)$-It\^o process with drift $a=0$.
\end{proposition}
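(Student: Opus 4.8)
The plan is to check the three conditions of Definition~\ref{def:Ito} directly for $Y=X$, using the Decreusefond--\"Ust\"unel kernel representation $X_t=\int_0^t K(t,s)\,dW_s$ (with $W$ the driving Brownian motion and $K$ in the class of Definition~\ref{def:DU}) together with the explicit construction of the triple $(D^X,\delta^X,\Pi^X)$ that will be given in Section~\ref{sec:volterra}. I write $R(t,s)=\mathbb{E}[X_tX_s]$ for the covariance, take $\HH$ to be the Cameron--Martin space of $X$ (Remark~\ref{rem:H}), and use that $X_t=W^X(\mathbf{1}_{[0,t]})$ for the isonormal Gaussian process $W^X$ on $\HH$, so that $D^X X_t=\mathbf{1}_{[0,t]}\in\HH$.

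Conditions (1) and (3) should follow by routine Gaussian bookkeeping. For $L^2(\Omega)$-continuity of $t\mapsto X_t$ I would use $\mathbb{E}[(X_t-X_s)^2]=R(t,t)-2R(s,t)+R(s,s)$ and the continuity of $R$ on $[0,T]^2$, which the Decreusefond--\"Ust\"unel conditions guarantee. Since $X_t$ lies in the first Wiener chaos of the generating Gaussian family, $X_t\in\Dom(D^X)$ with $D^X X_t=\mathbf{1}_{[0,t]}$, a \emph{deterministic} element of $\HH$; because deterministic (hence adapted) elements lie in the range of the predictable projection, $\Pi^X(D^X X_t)=D^X X_t$, so that
\[
\norm{\Pi^X D^X X_t}_\HH^2=\norm{\mathbf{1}_{[0,t]}}_\HH^2=R(t,t).
\]
As $t\mapsto R(t,t)$ is continuous, hence bounded, on $[0,T]$, this gives $\int_0^T\mathbb{E}\norm{\Pi^X D^X X_t}_\HH^2\,dt=\int_0^T R(t,t)\,dt<\infty$, which is condition (3); it also records $\Gamma^X(t)=R(t,t)=\mathbb{E}[X_t^2]$, consistent with Definition~\ref{def:bracket} and Proposition~\ref{prop:Hurstinvariant}.

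For condition (2) I would take $a\equiv 0$ and note $X_0=0$ and $\mathbb{E}[X_t]=0$, so that~\eqref{eq:decomp} reduces to the identity $X_t=\delta^X(\Pi^X D^X X\cdot\mathbf{1}_{[0,t]})$. Applying the operator factorization~\eqref{eq:factor}---valid for the triple $(D^X,\delta^X,\Pi^X)$ on the Volterra--Gaussian class, as will be verified in Section~\ref{sec:volterra} (Proposition~\ref{prop:voltverify})---to $F=X_t$ gives $X_t=(\Id-\mathbb{E})X_t=\delta^X(\Pi^X D^X X_t)=\delta^X(\mathbf{1}_{[0,t]})$. The remaining step is to unwind the abstract notation $\Pi^X D^X X\cdot\mathbf{1}_{[0,t]}$ in the Volterra realization and check that its divergence is exactly $\delta^X(\mathbf{1}_{[0,t]})$; since $D^X X_s=\mathbf{1}_{[0,s]}$ is fixed by $\Pi^X$ for every $s$, the integrand appearing in~\eqref{eq:decomp} collapses to the deterministic $\HH$-element $\mathbf{1}_{[0,t]}$ under the identification built into the definition of $\delta$ on such expressions (Remark~\ref{rem:notation}) and the kernel transfer relating $\delta^X$ to the driving Brownian Skorokhod integral recalled in Section~\ref{sec:volterra}. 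This closes condition (2) with $a=0$ and completes the verification.

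The hard part is precisely this last identification. For Brownian motion the predictable projection is transparent and there is nothing to do, but for a general Volterra kernel $K$ the projection $\Pi^X$ is defined through the transfer operator associated with $K$, and I expect the real work to lie in confirming, from the Decreusefond--\"Ust\"unel calculus, both that $\Pi^X$ leaves the deterministic element $D^X X_t$ unchanged and that the abstract integrand $\Pi^X D^X X\cdot\mathbf{1}_{[0,t]}$ reduces to $\mathbf{1}_{[0,t]}$, so that its divergence recovers $X_t$. Everything else---$L^2(\Omega)$-continuity, membership in $\Dom(D^X)$, and the energy integrability bound---is immediate once the kernel representation and the operators of Section~\ref{sec:volterra} are in hand.
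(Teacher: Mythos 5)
Your proposal is correct and follows essentially the same route as the paper, whose proof of this proposition is simply a pointer to Proposition~\ref{prop:voltverify}: there the axioms are verified via the standard Malliavin construction, with $D^X X_t$ a deterministic element of the Cameron--Martin space (the paper writes it as $k_t = K(t,\cdot)$ rather than your isometric representer $\mathbf{1}_{[0,t]}$), fixed by $\Pi^X$, so that the factorization~\eqref{eq:factor} applied to the centered $X_t$ yields the decomposition with $a=0$. The ``hard part'' you flag --- identifying $\delta^X(\Pi^X D^X X\cdot\mathbf{1}_{[0,t]})$ with $\delta^X$ of a single $\HH$-element via the kernel transfer operator --- is left at exactly the same level of detail in the paper (Remark~\ref{rem:notation}), so your account is, if anything, more explicit than the original.
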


\begin{proof}
See Section~\ref{sec:volterra}, Proposition~\ref{prop:voltverify}.
\end{proof}

\subsection{Domain conditions for Volterra Gaussian processes}

\begin{remark}[Automatic regularity]\label{rem:domainauto}
For the centered Volterra Gaussian process $X_t = \int_0^t K(t,s)\,dW_s$ satisfying the Decreusefond--\"Ust\"unel conditions (Definition~\ref{def:DU}), the domain conditions required by Theorem~\ref{thm:main} are automatic:
\begin{enumerate}[label=(\arabic*)]
\item $X_t \in \Dom(D)$ for each $t$, with $DX_t = K(t,\cdot)$ deterministic.
\item $\varphi(X_t) \in \Dom(D)$ for $\varphi\in C^3_b(\mathbb{R})$, by the chain rule on cylindrical functionals.
\item $\varphi'(X_t)\Pi DX_t = \varphi'(X_t)K(t,\cdot) \in \Dom(\delta)$, since $\varphi'(X_t)\in L^\infty(\Omega)$ (as $\varphi'\in C^2_b$) and $K(t,\cdot)$ is deterministic (Proposition~\ref{prop:domain}).
\end{enumerate}
No additional H\"older regularity assumptions are needed. The proof of Theorem~\ref{thm:main} uses only the Decreusefond--\"Ust\"unel conditions and the Gaussian structure of $X$.
\end{remark}

\section{The Product Rule for Divergences}\label{sec:product}

The product rule for divergences is a fundamental algebraic identity in the operator framework. This section establishes the rule with sharp domain conditions.

\subsection{Statement and proof}

\begin{theorem}[Product rule for divergence]\label{thm:product}
Let $(D,\delta,\Pi)$ satisfy Assumptions~\ref{ass:chain}--\ref{ass:ortho}. Let $F\in\Dom(D)$ and $u\in\Dom(\delta)$. Assume the pointwise product $Fu:\Omega\to\HH$ (given by $(Fu)(\omega):=F(\omega)u(\omega)$) satisfies:
\begin{enumerate}[label=(\arabic*)]
\item $Fu\in L^2(\Omega;\HH)$,
\item $Fu\in\Dom(\delta)$.
\end{enumerate}
Then
\begin{equation}\label{eq:productrule}
\delta(Fu) = F\delta(u) - \inner{DF}{u}_\HH \qquad \text{in } L^2(\Omega).
\end{equation}
\end{theorem}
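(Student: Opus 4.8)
The plan is to prove \eqref{eq:productrule} in weak (dual) form against a dense class of test functionals and then remove the test functional. Let $\mathcal{G}\subset\AAA$ be a family of functionals that are bounded with $\norm{DG}_\HH\in L^\infty(\Omega)$ and dense in $L^2(\Omega)$; in the constructions underlying \cite{FonI} one may take smooth cylindrical functionals for $\mathcal{G}$. For $G\in\mathcal{G}$ the adjoint relation \eqref{eq:adjoint}, applied with $G\in\Dom(D)$ and $Fu\in\Dom(\delta)$ (hypothesis~(2)), gives $\mathbb{E}[G\,\delta(Fu)] = \mathbb{E}[\inner{DG}{Fu}_\HH] = \mathbb{E}[F\inner{DG}{u}_\HH]$, the last equality because $F$ is scalar-valued. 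It thus suffices to show $\mathbb{E}[F\inner{DG}{u}_\HH] = \mathbb{E}[G(F\delta(u)-\inner{DF}{u}_\HH)]$ for every $G\in\mathcal{G}$.

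The engine is the Leibniz rule for $D$ on the algebra $\AAA$: for $\Phi,G\in\AAA$ one has $D(\Phi G) = \Phi\,DG + G\,D\Phi$. This follows from the bivariate chain rule of Assumption~\ref{ass:chain} applied to a smoothly truncated version of $(x,y)\mapsto xy$, using that $\AAA$ is stable under multiplication (so $\Phi G\in L^2$) and that $D$ is closed; equivalently, it is the derivation property of $D$ recorded in \cite{FonI}. Approximate $F\in\Dom(D)$ by $F_n\in\AAA$ with $F_n\to F$ in $L^2(\Omega)$ and $DF_n\to DF$ in $L^2(\Omega;\HH)$, using that $\AAA$ is a core for $D$. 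Then $F_nG\in\AAA\subset\Dom(D)$, and applying \eqref{eq:adjoint} once more (now with $F_nG\in\Dom(D)$ and $u\in\Dom(\delta)$),
\[
\mathbb{E}\bigl[F_n\inner{DG}{u}_\HH\bigr] = \mathbb{E}\bigl[\inner{D(F_nG)}{u}_\HH\bigr] - \mathbb{E}\bigl[G\inner{DF_n}{u}_\HH\bigr] = \mathbb{E}\bigl[F_nG\,\delta(u)\bigr] - \mathbb{E}\bigl[G\inner{DF_n}{u}_\HH\bigr].
\]

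Next I would pass to the limit $n\to\infty$. Because $G$ is bounded with bounded Malliavin derivative, both $\inner{DG}{u}_\HH$ and $\delta(u)$ lie in $L^2(\Omega)$, so $F_n\inner{DG}{u}_\HH\to F\inner{DG}{u}_\HH$ and $F_nG\,\delta(u)\to FG\,\delta(u)$ in $L^1(\Omega)$ (from $F_n\to F$ in $L^2$), while $G\inner{DF_n}{u}_\HH\to G\inner{DF}{u}_\HH$ in $L^1(\Omega)$ (from $DF_n\to DF$ in $L^2(\Omega;\HH)$). The weak identity follows, that is, $\mathbb{E}[G(\delta(Fu)-F\delta(u)+\inner{DF}{u}_\HH)]=0$ for all $G\in\mathcal{G}$. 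Each of $\delta(Fu)$, $F\delta(u)$, $\inner{DF}{u}_\HH$ is in $L^1(\Omega)$ (the first is even in $L^2$; the others are products of two $L^2$ factors), so the bracketed quantity is an $L^1$ random variable annihilated by $\mathbb{E}[G\,\cdot\,]$ over a family dense in $L^2(\Omega)$; a routine density-and-truncation argument then forces it to vanish a.s., which is \eqref{eq:productrule}.

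The step I expect to be the main obstacle is the Leibniz rule with one non-smooth factor: concretely, the combination of (i) $\AAA$ being a core for $D$, so that the graph-norm approximation $F_n\to F$ exists, and (ii) the truncation-and-domination argument that upgrades the $C^1_b$ chain rule of Assumption~\ref{ass:chain} to a genuine product rule on $\AAA$. The remainder is integrability accounting, which also records why the hypotheses are sharp: boundedness of $DG$ on $\mathcal{G}$ is exactly what makes $F\inner{DG}{u}_\HH$ integrable against $F\in L^2$, and hypothesis~(2) ($Fu\in\Dom(\delta)$) is used essentially---it is what makes $\delta(Fu)$ available to the adjoint relation in the opening line. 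Finally, note that neither the factorization (Assumption~\ref{ass:factor}) nor the orthogonality (Assumption~\ref{ass:ortho}) is used: the product rule is a statement about the adjoint pair $(D,\delta)$ alone, and the projection $\Pi$ plays no role.
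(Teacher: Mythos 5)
Your proposal follows essentially the same route as the paper's proof: test $\delta(Fu)$ against a dense class of functionals via the adjoint relation \eqref{eq:adjoint}, apply the Leibniz rule $D(FG)=F\,DG+G\,DF$ to transfer the derivative, use the adjoint relation a second time, and remove the test functional by density. The only difference is that you carry out explicitly the integrability bookkeeping (restricting to bounded $G$ with bounded $DG$) and the graph-norm approximation $F_n\to F$ that the paper compresses into Remark~\ref{rem:chainext}; this makes your version more careful but it is not a different argument.
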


\begin{proof}
For any $G\in\Dom(D)$, by the definition of the adjoint~\eqref{eq:adjoint},
\[
\mathbb{E}[G\delta(Fu)] = \mathbb{E}[\inner{DG}{Fu}_\HH].
\]
Since the inner product is bilinear,
$\mathbb{E}[\inner{DG}{Fu}_\HH] = \mathbb{E}[F\inner{DG}{u}_\HH]$.
By the chain rule (Assumption~\ref{ass:chain}) applied to $\varphi(x,y)=xy$, $D(GF) = G\cdot DF + F\cdot DG$. Thus,
\begin{align*}
\mathbb{E}[F\inner{DG}{u}_\HH] &= \mathbb{E}[\inner{D(GF) - G\cdot DF}{u}_\HH]\\
&= \mathbb{E}[\inner{D(GF)}{u}_\HH] - \mathbb{E}[G\inner{DF}{u}_\HH]\\
&= \mathbb{E}[(GF)\delta(u)] - \mathbb{E}[G\inner{DF}{u}_\HH]\\
&= \mathbb{E}[G(F\delta(u) - \inner{DF}{u}_\HH)].
\end{align*}
Since $\Dom(D)$ is dense in $L^2(\Omega)$, the conclusion follows.
\end{proof}

\begin{remark}\label{rem:chainext}
The chain rule $D(GF)=G\cdot DF + F\cdot DG$ requires $G,F\in\AAA$ (the chain rule core). Extension to general $F\in\Dom(D)$ follows by density and closedness of $D$.
\end{remark}

\subsection{Domain conditions}

\begin{proposition}[Domain criterion]\label{prop:domain}
Let $F\in\Dom(D)\cap L^\infty(\Omega)$ and $u\in\Dom(\delta)$. Then $Fu\in\Dom(\delta)$ if and only if the linear functional
$\Phi_u: G\mapsto \mathbb{E}[\inner{DG}{Fu}_\HH]$
is continuous on $\Dom(D)$ with respect to the graph norm $\norm{G}_{\mathrm{graph}} := \norm{G}_{L^2(\Omega)} + \norm{DG}_{L^2(\Omega;\HH)}$.
\end{proposition}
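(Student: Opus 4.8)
The plan is to prove Proposition~\ref{prop:domain} by recognizing that ``$Fu \in \Dom(\delta)$'' is, by the very definition of the adjoint in~\eqref{eq:adjoint}, equivalent to the existence of an $L^2(\Omega)$ representative for the functional $G \mapsto \mathbb{E}[\inner{DG}{Fu}_\HH]$. This is essentially a restatement of the abstract characterization of the domain of a Hilbert-space adjoint, specialized to the pair $(D,\delta)$. First I would fix $v := Fu$; by hypothesis (1) of Theorem~\ref{thm:product}-type conditions, or directly here since $F \in L^\infty(\Omega)$ and $u \in L^2(\Omega;\HH)$, we have $v = Fu \in L^2(\Omega;\HH)$, so the functional $\Phi_u(G) = \mathbb{E}[\inner{DG}{v}_\HH]$ is at least well-defined on $\Dom(D)$.

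The key step is the standard duality argument in both directions. For the forward implication, suppose $v \in \Dom(\delta)$. Then by~\eqref{eq:adjoint}, $\Phi_u(G) = \mathbb{E}[G\,\delta(v)]$ for all $G \in \Dom(D)$, and by Cauchy--Schwarz $|\Phi_u(G)| \le \norm{\delta(v)}_{L^2(\Omega)}\norm{G}_{L^2(\Omega)} \le \norm{\delta(v)}_{L^2(\Omega)}\norm{G}_{\mathrm{graph}}$, so $\Phi_u$ is graph-norm continuous. For the converse, suppose $\Phi_u$ is continuous in the graph norm. Since $D$ is closed, $\Dom(D)$ equipped with the graph norm is a Hilbert space, and $\Phi_u$ extends to a bounded linear functional on its completion; restricting, $\Phi_u$ is in particular bounded on $\Dom(D)$ with respect to the $L^2(\Omega)$-norm on a graph-norm-dense subspace is not quite enough, so the cleaner route is: graph continuity means there is $C$ with $|\mathbb{E}[\inner{DG}{v}_\HH]| \le C(\norm{G}_{L^2} + \norm{DG}_{L^2(\HH)})$. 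Apply this on the subspace $\{G \in \Dom(D) : \mathbb{E}[G] = 0, \text{ suitably normalized}\}$ — or more simply, invoke that $D$ being densely defined and closed, its adjoint $\delta$ has domain exactly $\{v : G \mapsto \mathbb{E}[\inner{DG}{v}_\HH] \text{ is } L^2\text{-bounded}\}$; and graph continuity of $\Phi_u$ is equivalent to $L^2$-boundedness after one observes that the $\norm{DG}_{L^2(\HH)}$ term can be absorbed because $v$ itself lies in $L^2(\Omega;\HH)$, giving the a priori bound $|\mathbb{E}[\inner{DG}{v}_\HH]| \le \norm{v}_{L^2(\HH)}\norm{DG}_{L^2(\HH)}$ automatically; hence the nontrivial content of graph continuity is exactly the $L^2(\Omega)$-boundedness in $G$, which is the definition of $v \in \Dom(\delta)$.

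So the proof assembles as follows: (i) record that $Fu \in L^2(\Omega;\HH)$ under the stated hypotheses; (ii) state the general principle that for a densely defined closed operator $D$ with adjoint $\delta$, one has $v \in \Dom(\delta)$ iff $G \mapsto \mathbb{E}[\inner{DG}{v}_\HH]$ is $L^2(\Omega)$-bounded on $\Dom(D)$; (iii) show this $L^2$-boundedness is equivalent to graph-norm continuity of $\Phi_u$, using that the automatic bound $|\Phi_u(G)| \le \norm{Fu}_{L^2(\HH)}\norm{DG}_{L^2(\HH)}$ handles the derivative part of the graph norm for free in one direction, and Cauchy--Schwarz against $\delta(v)$ handles the other; (iv) conclude, and note that when $v \in \Dom(\delta)$ the representative is precisely $\delta(Fu) = F\delta(u) - \inner{DF}{u}_\HH$ by Theorem~\ref{thm:product}.

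The main obstacle — more a subtlety than a genuine difficulty — is the logical equivalence in step (iii): graph-norm continuity a priori allows the bound to use $\norm{DG}_{L^2(\HH)}$, so one must argue that this does not enlarge the domain beyond $\Dom(\delta)$. The resolution is that the $\norm{DG}_{L^2(\HH)}$ contribution is already dominated, for \emph{any} $v \in L^2(\Omega;\HH)$, by $\norm{v}_{L^2(\HH)}\norm{DG}_{L^2(\HH)}$ via Cauchy--Schwarz on $\HH$ pointwise followed by Cauchy--Schwarz on $\Omega$; therefore graph-norm continuity carries no information beyond $L^2(\Omega)$-boundedness in $G$, and the two conditions coincide. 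I would spell this out carefully since it is the one place where the statement could be misread as weaker or stronger than it is; everything else is a direct application of~\eqref{eq:adjoint} and the definition of the adjoint domain.
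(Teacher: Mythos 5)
Your forward direction is fine, but step (iii) --- the claimed equivalence between graph-norm continuity of $\Phi_u$ and $L^2(\Omega)$-boundedness --- contains a genuine logical gap, and it sits exactly at the point you yourself flagged as ``the one subtlety.'' You correctly observe that for any $v = Fu \in L^2(\Omega;\HH)$ one has the automatic bound $|\Phi_u(G)| \le \norm{Fu}_{L^2(\Omega;\HH)}\norm{DG}_{L^2(\Omega;\HH)} \le \norm{Fu}_{L^2(\Omega;\HH)}\norm{G}_{\mathrm{graph}}$. The correct conclusion from this is that graph-norm continuity of $\Phi_u$ holds \emph{unconditionally} under the stated hypotheses (since $F\in L^\infty(\Omega)$ forces $Fu\in L^2(\Omega;\HH)$), not that it is equivalent to $L^2(\Omega)$-boundedness. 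An estimate $|\Phi_u(G)| \le C(\norm{G}_{L^2} + \norm{DG}_{L^2(\Omega;\HH)})$ together with the automatic bound $|\Phi_u(G)| \le C'\norm{DG}_{L^2(\Omega;\HH)}$ does not allow you to ``absorb'' the derivative term and deduce $|\Phi_u(G)| \le C''\norm{G}_{L^2}$: since $D$ is unbounded, $\norm{DG}_{L^2(\Omega;\HH)}$ is not controlled by $\norm{G}_{L^2(\Omega)}$, and subtracting one upper bound from another is not a valid manipulation of inequalities. The domain of the adjoint of a densely defined closed operator is characterized by boundedness of $G \mapsto \mathbb{E}[\inner{DG}{v}_\HH]$ with respect to $\norm{G}_{L^2(\Omega)}$ \emph{alone}; graph-norm continuity is strictly weaker (here, vacuous), so your converse implication does not go through.

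What your own observation actually reveals is that the proposition, read literally, cannot be right: its right-hand condition is always satisfied under the hypotheses, so it would assert $Fu\in\Dom(\delta)$ for every $F\in\Dom(D)\cap L^\infty(\Omega)$ and $u\in\Dom(\delta)$. That fails in general --- for instance, Theorem~\ref{thm:product} forces $\delta(Fu)=F\delta(u)-\inner{DF}{u}_\HH$, and while $F\delta(u)\in L^2(\Omega)$, the term $\inner{DF}{u}_\HH$ is a priori only in $L^1(\Omega)$ (a product of two $L^2$ quantities), so the candidate representative need not be square-integrable. The paper's own proof commits the same conflation (it invokes ``Riesz representation'' without specifying the norm, then verifies continuity only in the graph norm), so you have faithfully reproduced its defect rather than introduced a new one; but a correct statement must replace the graph norm by the $L^2(\Omega)$ norm in the continuity requirement, after which the equivalence is exactly the textbook characterization of $\Dom(D^*)$ that your step (ii) already supplies.
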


\begin{proof}
By definition, $Fu\in\Dom(\delta)$ iff there exists $\xi\in L^2(\Omega)$ such that $\mathbb{E}[\inner{DG}{Fu}_\HH] = \mathbb{E}[G\xi]$ for all $G\in\Dom(D)$. This is equivalent to continuity of $\Phi_u$ (by Riesz representation). If $F\in L^\infty(\Omega)$, then
$|\mathbb{E}[\inner{DG}{Fu}_\HH]| \leq \norm{F}_{L^\infty}\norm{DG}_{L^2(\Omega;\HH)}\norm{u}_{L^2(\Omega;\HH)}$,
which is continuous in the graph norm.
\end{proof}

\subsection{Orthogonal projection identity}

\begin{lemma}[Orthogonal projection identity]\label{lem:ortho}
Under Assumption~\ref{ass:ortho}, for $F\in\Dom(D)$,
\begin{equation}\label{eq:ortho}
\inner{DF}{\Pi DF}_\HH = \norm{\Pi DF}_\HH^2.
\end{equation}
\end{lemma}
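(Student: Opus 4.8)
The plan is to use the self-adjointness of $\Pi$ together with its idempotence, which are exactly the two hypotheses bundled into Assumption~\ref{ass:ortho} (orthogonality) and clause (3) of Definition~\ref{def:triple} ($\Pi^2 = \Pi$). The identity to prove is an equality of scalar-valued random variables (or, if one prefers, of $L^1(\Omega)$ elements after taking expectations); in fact it holds pointwise in $\omega$ once we recognize that the $\HH$-valued random variable $DF$ lives, for a.e.\ $\omega$, in the Hilbert space $\HH$ on which $\Pi$ acts fiberwise. The cleanest route, however, is to work directly in $L^2(\Omega;\HH)$, treating $DF$ as a single vector there and $\Pi$ as a bounded self-adjoint idempotent operator on that space.

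First I would write $\inner{DF}{\Pi DF}_\HH$ and observe that, as an element of $L^1(\Omega)$, its expectation equals the $L^2(\Omega;\HH)$ inner product $\langle DF, \Pi DF\rangle_{L^2(\Omega;\HH)}$; but to get the stronger pointwise statement I would instead argue fiberwise. For a.e.\ $\omega$, set $h := DF(\omega) \in \HH$ and let $\Pi_\omega$ denote the action of $\Pi$ on that fiber — since $\Pi$ maps $L^2(\Omega;\HH)\to L^2(\Omega;\HH)$ and (in the Volterra Gaussian realization, and more generally whenever $\Pi$ is a predictable projection) acts by restricting the time-parameter, it induces a bounded operator on $\HH$ itself; self-adjointness and idempotence pass to the fibers. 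Then $\inner{h}{\Pi_\omega h}_\HH = \inner{h}{\Pi_\omega^2 h}_\HH = \inner{\Pi_\omega^* h}{\Pi_\omega h}_\HH = \inner{\Pi_\omega h}{\Pi_\omega h}_\HH = \norm{\Pi_\omega h}_\HH^2$, which is exactly $\norm{\Pi DF(\omega)}_\HH^2$. The three equalities use, in order: $\Pi = \Pi^2$, $\Pi = \Pi^*$, and the definition of the norm.

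If one wishes to avoid the fiberwise decomposition entirely and stay with operators on $L^2(\Omega;\HH)$, the same three-line computation gives $\langle DF,\Pi DF\rangle_{L^2(\Omega;\HH)} = \langle \Pi DF,\Pi DF\rangle_{L^2(\Omega;\HH)} = \norm{\Pi DF}_{L^2(\Omega;\HH)}^2 = \mathbb{E}\big[\norm{\Pi DF}_\HH^2\big]$, which is the expectation of \eqref{eq:ortho}; to upgrade to the almost-sure pointwise identity one notes that $\inner{DF(\omega)}{\Pi DF(\omega)}_\HH - \norm{\Pi DF(\omega)}_\HH^2 = \inner{(\Id-\Pi)DF(\omega)}{\Pi DF(\omega)}_\HH$ and this vanishes a.e.\ because $(\Id-\Pi)DF \perp \Pi DF$ in $L^2(\Omega;\HH)$ forces fiberwise orthogonality (again using that $\Pi$ acts fiberwise, or simply testing against indicator sets $A\in\FF$ and using that $\mathbf{1}_A\cdot\Pi = \Pi\cdot\mathbf{1}_A$, i.e.\ that $\Pi$ commutes with multiplication by $\FF$-measurable scalars).

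I do not anticipate a genuine obstacle here — the lemma is a direct algebraic consequence of $\Pi = \Pi^* = \Pi^2$. The only point requiring a sentence of care is the passage between the $L^2(\Omega;\HH)$-level statement and the claimed pointwise identity in $\HH$: this is legitimate because the predictable projection $\Pi$ commutes with multiplication by bounded $\FF$-measurable random variables, so orthogonality in $L^2(\Omega;\HH)$ localizes to orthogonality in each fiber $\HH$. If the intended reading of \eqref{eq:ortho} is already as an identity in $L^1(\Omega)$ under expectation, even this remark is unnecessary and the proof is the bare three-equality chain.
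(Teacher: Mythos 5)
Your core computation is correct and is essentially the paper's proof in different clothing: the paper writes $DF = \Pi DF + (\Id-\Pi)DF$ and kills the cross term by orthogonality of ranges, while you write $\inner{h}{\Pi h} = \inner{\Pi^* h}{\Pi h} = \norm{\Pi h}_\HH^2$; both arguments use exactly $\Pi=\Pi^*$ and $\Pi^2=\Pi$ and nothing else. If the identity~\eqref{eq:ortho} is read at the level of $L^2(\Omega;\HH)$ (equivalently, after taking expectations), your three-equality chain settles it and nothing more needs saying.

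The part to be careful about is the attempted upgrade to an almost-sure pointwise identity. Your mechanism --- that $\Pi$ acts fiberwise on $\HH$ and commutes with multiplication by bounded $\FF$-measurable scalars --- is not guaranteed by Assumption~\ref{ass:ortho}, and it fails for the realization of $\Pi$ the paper actually uses in Remark~\ref{rem:mechanism} and Step~1 of the main proof, namely the predictable projection $(\Pi u)_r = \mathbb{E}[u_r\mid\FF_r]$. There $(\Pi(\mathbf{1}_A u))_r = \mathbb{E}[\mathbf{1}_A u_r\mid\FF_r]$, which differs from $\mathbf{1}_A\,\mathbb{E}[u_r\mid\FF_r]$ unless $A\in\FF_r$, and $(\Pi u)(\omega)$ depends on the whole random variable $u$, not on $u(\omega)$ alone; orthogonality of $(\Id-\Pi)DF$ and $\Pi DF$ in $L^2(\Omega;\HH)$ then only yields $\mathbb{E}[\inner{(\Id-\Pi)DF}{\Pi DF}_\HH]=0$, not fiberwise vanishing. (The paper's own one-line proof is open to the same objection if one insists on the pointwise reading.) In the actual application the issue is harmless: for a Volterra Gaussian process $DX_t = K(t,\cdot)$ is deterministic, so $\Pi DX_t = DX_t$ and~\eqref{eq:ortho} is trivially pointwise. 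But you should either state the conclusion in expectation, or add the hypothesis that $\Pi$ has the form $P\otimes\Id_{L^2(\Omega)}$ for an orthogonal projection $P$ on $\HH$ (the reading suggested by Section~\ref{sec:volterra}), under which your fiberwise argument is valid; as written, the claimed commutation is the one step that would fail.
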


\begin{proof}
Since $\Pi$ is self-adjoint and idempotent,
\[
\inner{DF}{\Pi DF}_\HH = \inner{\Pi DF}{\Pi DF}_\HH + \inner{(\Id-\Pi)DF}{\Pi DF}_\HH = \norm{\Pi DF}_\HH^2 + 0,
\]
where the second term vanishes by orthogonality of the ranges of $(\Id-\Pi)$ and $\Pi$.
\end{proof}

\begin{remark}\label{rem:orthokey}
This identity shows that the inner product $\inner{DF}{\Pi DF}_\HH$ appearing in the product rule simplifies to $\norm{\Pi DF}_\HH^2$, which is the integrand in the intrinsic bracket.
\end{remark}

\section{The Operator It\^o Formula}\label{sec:proof}

\subsection{Main theorem}

\begin{theorem}[Operator It\^o formula for Volterra Gaussian processes]\label{thm:main}
Let $X$ be a centered Volterra Gaussian process satisfying the Decreusefond--\"Ust\"unel conditions (Definition~\ref{def:DU}), with operator triple $(D,\delta,\Pi)$ satisfying Assumptions~\ref{ass:chain}--\ref{ass:energy} (verified in Proposition~\ref{prop:voltverify}). Let $\varphi\in C^3_b(\mathbb{R})$. Then for all $t\in[0,T]$,
\begin{equation}\label{eq:Ito}
\varphi(X_t) = \varphi(0) + \delta(\varphi'(X)\Pi DX\cdot\mathbf{1}_{[0,t]}) + \frac{1}{2}\int_0^t \varphi''(X_s)\,d\Gamma^X(s),
\end{equation}
where $\Gamma^X(s) = \norm{\Pi DX_s}_\HH^2 = R^X(s,s) = \mathbb{E}[X_s^2]$ is the (deterministic) energy function, the last integral is a Stieltjes integral against the bounded variation measure $d\Gamma^X$, and the equality holds in $L^2(\Omega)$. When $\Gamma^X$ is absolutely continuous with density $\dot{\Gamma}^X$, the correction reduces to $\frac{1}{2}\int_0^t \varphi''(X_s)\dot{\Gamma}^X(s)\,ds$.
\end{theorem}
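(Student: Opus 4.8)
The plan is to follow the Clark--Ocone route of Decreusefond--\"Ust\"unel, translated into the operator language, exploiting that for a centered Volterra Gaussian process $X_t = \int_0^t K(t,s)\,dW_s$ the Malliavin derivative $DX_t = K(t,\cdot)$ is deterministic, so $\Pi DX_t = DX_t$ (the kernel is already adapted in the integration variable) and $\Gamma^X(t) = \norm{K(t,\cdot)}_\HH^2 = R^X(t,t) = \mathbb{E}[X_t^2]$. First I would fix $\varphi \in C^3_b$ and reduce to showing that the process
\[
M_t := \varphi(X_t) - \varphi(0) - \frac{1}{2}\int_0^t \varphi''(X_s)\,d\Gamma^X(s)
\]
is representable as $\delta(u \cdot \mathbf{1}_{[0,t]})$ for the specific integrand $u_s = \varphi'(X_s)\Pi DX_s$. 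By the factorization Assumption~\ref{ass:factor}, it suffices to show two things: (a) $M_t$ has mean $\varphi(0)$, i.e.\ $\mathbb{E}[\varphi(X_t)] - \varphi(0) = \frac{1}{2}\int_0^t \varphi''(X_s)\,d\Gamma^X(s)$ after taking expectations; and (b) $(\Id - \mathbb{E})M_t = \delta(\varphi'(X)\Pi DX \cdot \mathbf{1}_{[0,t]})$.

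Step (a) is the Gaussian heat-equation identity: since $X_s \sim \mathcal{N}(0,\Gamma^X(s))$, writing $g(t,x)$ for the Gaussian density with variance $\Gamma^X(t)$, one has $\partial_t \mathbb{E}[\varphi(X_t)] = \frac{1}{2}\dot{\Gamma}^X(t)\,\mathbb{E}[\varphi''(X_t)]$ whenever $\Gamma^X$ is differentiable, by the identity $\partial_t g = \frac{1}{2}\dot{\Gamma}^X\,\partial_{xx}g$ and integration by parts; in the general bounded-variation case one replaces the time-derivative identity by its integrated (Stieltjes) form, using that $t \mapsto \mathbb{E}[\varphi(X_t)]$ is of bounded variation with differential $\frac{1}{2}\varphi''$-averaged against $d\Gamma^X$ --- this is where one must be slightly careful to justify differentiation under the expectation and the Stieltjes change of variables, but $\varphi \in C^3_b$ and $\Gamma^X \in BV$ make every term dominated. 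Integrating from $0$ to $t$ gives (a).

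For step (b), I would apply the Clark--Ocone formula in the operator framework: for any sufficiently regular $F$, $(\Id - \mathbb{E})F = \delta(\Pi DF)$, so $(\Id-\mathbb{E})\varphi(X_t) = \delta(\Pi D\varphi(X_t)) = \delta(\Pi(\varphi'(X_t)DX_t))$ by the chain rule (Assumption~\ref{ass:chain}, extended to $C^3_b$ by truncation/density and closedness of $D$). The subtlety --- and the structural heart of the paper --- is that $\Pi(\varphi'(X_t)DX_t) \neq \varphi'(X_t)\Pi DX_t$ in general, because $\Pi$ does not commute with multiplication by the non-predictable random variable $\varphi'(X_t)$. One resolves this exactly as in \cite{DU}: decompose via the conditional expectation structure, $\Pi(\varphi'(X_t)DX_t)(s) = \mathbb{E}[\varphi'(X_t) \mid \mathcal{F}_s]\,K(t,s)$ at "time" $s$, and then the discrepancy between this and $\varphi'(X_t)K(t,s)$ is precisely what the deterministic correction $\frac{1}{2}\int_0^t \varphi''\,d\Gamma^X$ absorbs. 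Concretely I would differentiate the map $t \mapsto \delta(\varphi'(X)\Pi DX \cdot \mathbf{1}_{[0,t]})$ and compare with $\partial_t[(\Id-\mathbb{E})\varphi(X_t)]$, using the divergence product rule (Theorem~\ref{thm:product}) together with the orthogonal projection identity (Lemma~\ref{lem:ortho}) to identify the cross term $\inner{D\varphi'(X_s)}{\Pi DX_s}_\HH = \varphi''(X_s)\inner{DX_s}{\Pi DX_s}_\HH = \varphi''(X_s)\norm{\Pi DX_s}_\HH^2 = \varphi''(X_s)\,\dot{\Gamma}^X(s)$. The Stieltjes (non-absolutely-continuous) case follows by approximating $\Gamma^X$ and passing to the limit using Assumption~\ref{ass:energy}, or directly by working with the integrated identity throughout.

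**Main obstacle.**
The hard part will be Step (b), specifically making the non-commutativity computation rigorous at the level of domains: one must verify that $\varphi'(X)\Pi DX \cdot \mathbf{1}_{[0,t]} \in \Dom(\delta)$ (supplied by Proposition~\ref{prop:domain} and Remark~\ref{rem:domainauto}, since $\varphi' \in C^2_b$ is bounded and $K(t,\cdot)$ is deterministic) and that the differentiated/integrated identity relating $\delta(\varphi'(X)\Pi DX\cdot\mathbf{1}_{[0,t]})$ to $(\Id-\mathbb{E})\varphi(X_t)$ holds with all exchanges of $\delta$, $D$, and $t$-differentiation justified. The cleanest route is to avoid $t$-differentiation entirely: test both sides of \eqref{eq:Ito} against an arbitrary $G \in \mathcal{A}$, use the adjoint relation \eqref{eq:adjoint} to turn $\mathbb{E}[G\,\delta(\varphi'(X)\Pi DX\cdot\mathbf{1}_{[0,t]})]$ into $\mathbb{E}[\int_0^t \inner{DG}{\varphi'(X_s)\Pi DX_s}_\HH\,ds]$, and then recognize the right-hand side, via the Gaussian integration-by-parts / heat-kernel identity, as exactly $\mathbb{E}[G(\varphi(X_t) - \varphi(0) - \frac{1}{2}\int_0^t \varphi''(X_s)\,d\Gamma^X(s))]$; density of $\mathcal{A}$ in $L^2(\Omega)$ then closes the argument. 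This is the Decreusefond--\"Ust\"unel computation verbatim, and the only genuinely new bookkeeping is checking that the operator-framework hypotheses (Assumptions~\ref{ass:chain}--\ref{ass:energy}) deliver each ingredient their Malliavin-calculus proof used.
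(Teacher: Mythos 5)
Your proposal follows essentially the same route as the paper's proof: the global Clark--Ocone factorization of $\varphi(X_t)$, the computation of $\mathbb{E}[\varphi(X_t)]$ via the Gaussian marginal density and the heat equation in the variance parameter, and the resolution of the non-commutativity $\Pi(\varphi'(X_t)DX_t)\neq\varphi'(X_t)\Pi DX_t$ by the conditional-expectation (Mehler) decomposition of Decreusefond--\"Ust\"unel, to which both you and the paper delegate the final bookkeeping. The only cosmetic difference is that you also sketch a duality/testing variant of Step~(b), but it reduces to the same \cite{DU} computation, so the argument is correct and matches the paper's.
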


\begin{remark}[Scope: driftless processes]\label{rem:nodrift}
Theorem~\ref{thm:main} is stated for the centered Volterra Gaussian process $X$ itself, without an adapted drift. This restriction is essential for the proof: it ensures that $DX_t = K(t,\cdot)$ is deterministic, which is used in both the marginal density computation (Step~2) and the Clark--Ocone assembly (Step~3). Processes with adapted drift $Y_t = \int_0^t a_s\,ds + X_t$ can be handled via standard Girsanov-type arguments once the driftless formula is established; the structural content of the It\^o correction resides entirely in the noise term.
\end{remark}

\begin{remark}[Deterministic energy function]\label{rem:detGamma}
Because $DX_t = K(t,\cdot)$ is deterministic for a Volterra Gaussian process, $\Gamma^X(t) = \norm{K(t,\cdot)}_\HH^2 = R^X(t,t) = \mathbb{E}[X_t^2]$ is a deterministic function of $t$ alone. This is essential: it allows the energy function to serve as the ``time'' variable for the Gaussian heat equation in Step~2, and it makes the Clark--Ocone assembly in Step~3 exact.
\end{remark}

\begin{remark}[The It\^o formula from the operator perspective]\label{rem:mechanism}
In this framework, the It\^o correction arises from the \emph{non-commutativity} of the predictable projection $\Pi$ with nonlinear functions. Specifically, $\Pi[\varphi'(X_t)DX_t] \neq \varphi'(X_t)\Pi[DX_t]$ because $\varphi'(X_t)$ is $\FF_t$-measurable and therefore anticipating at times $r < t$, so $(\Pi[\varphi'(X_t)DX_t])_r = \mathbb{E}[\varphi'(X_t)(DX_t)_r|\FF_r]$. Since $(DX_t)_r = K(t,r)$ is deterministic, this factors as $\mathbb{E}[\varphi'(X_t)|\FF_r]\cdot K(t,r)$. Computing the conditional expectation $\mathbb{E}[\varphi'(X_t)|\FF_r]$ via Gaussian integration by parts produces the $\frac{1}{2}\varphi''$ correction against the energy function $\Gamma^X$ (see the proof in \S\ref{ssec:proof}).
\end{remark}

\begin{remark}[Relation to forward integration and quadratic variation]\label{rem:QVscope}
When $X$ has finite pathwise quadratic variation, Theorem~\ref{thm:main} recovers the classical forward integration formula. However, the hypotheses of Theorem~\ref{thm:main} do not require finite quadratic variation: the condition is $\Pi DX\in L^2([0,T]\times\Omega;\HH)$, which is an $L^2(\Omega)$ integrability condition on the Malliavin derivative, not a pathwise regularity condition. In particular, the formula applies to processes whose pathwise quadratic variation is infinite or undefined (e.g., fBM with $H < 1/4$). The proof never invokes pathwise increments or partition sums.
\end{remark}

\subsection{Proof}\label{ssec:proof}

We prove Theorem~\ref{thm:main} for centered Volterra Gaussian processes satisfying the Decreusefond--\"Ust\"unel conditions (Section~\ref{sec:volterra}), following the Clark--Ocone strategy of~\cite{DU}. The proof proceeds through three steps: apply the factorization globally to $\varphi(X_t)$, compute the expected value via the Gaussian marginal density, and assemble the formula.

\begin{remark}[Scope of the proof]\label{rem:proofscope}
The proof below uses Gaussianity in two essential places: the heat equation for the marginal density (Step~2), and the identification of the Clark--Ocone integrand via the Gaussian conditional expectation structure (Step~3). For non-Gaussian processes admitting a $(D,\delta,\Pi)$ triple, the theorem statement remains a conjecture; verification would require analogous distributional and conditional expectation estimates adapted to the specific process class.
\end{remark}

\medskip
\noindent\textbf{Step 1: Global factorization.}
By the Clark--Ocone factorization (Assumption~\ref{ass:factor} applied to $G = \varphi(X_t)$):
\begin{equation}\label{eq:globalCO}
\varphi(X_t) = \mathbb{E}[\varphi(X_t)] + \delta(\Pi D(\varphi(X_t)) \cdot \mathbf{1}_{[0,t]}).
\end{equation}
By the chain rule (Assumption~\ref{ass:chain}), $D(\varphi(X_t)) = \varphi'(X_t)DX_t = \varphi'(X_t)K(t,\cdot)$. Since $K(t,\cdot)$ is deterministic,
\[
(\Pi[\varphi'(X_t)K(t,\cdot)])_r = \mathbb{E}[\varphi'(X_t)|\FF_r]\cdot K(t,r).
\]
The non-commutativity $\mathbb{E}[\varphi'(X_t)|\FF_r] \neq \varphi'(X_t)$ is the source of the It\^o correction, as we now show.

\medskip
\noindent\textbf{Step 2: Expectation computation via the Gaussian marginal density.}
Define $m(t) := \mathbb{E}[\varphi(X_t)]$. Since $X_t$ is a centered Gaussian random variable with variance $\Gamma^X(t) = R^X(t,t)$, we have $X_t \sim \mathcal{N}(0,\Gamma^X(t))$ and
\begin{equation}\label{eq:marginal}
m(t) = \int_\mathbb{R} \varphi(x)\,p(x;\Gamma^X(t))\,dx, \qquad p(x;\sigma^2) := \frac{1}{\sqrt{2\pi\sigma^2}}\exp\!\Bigl(-\frac{x^2}{2\sigma^2}\Bigr).
\end{equation}
The Gaussian density satisfies the heat equation with respect to its variance parameter:
\begin{equation}\label{eq:heat}
\frac{\partial p}{\partial \sigma^2}(x;\sigma^2) = \frac{1}{2}\frac{\partial^2 p}{\partial x^2}(x;\sigma^2).
\end{equation}
Therefore, viewing $m$ as a function of $\Gamma^X(t)$ (with $\Gamma^X$ of bounded variation by Assumption~\ref{ass:energy}), the chain rule for Stieltjes integrals gives:
\begin{align}
m(t) - m(0) &= \int_0^t \frac{dm}{d\Gamma^X}\,d\Gamma^X(s) = \int_0^t \biggl(\int_\mathbb{R} \varphi(x)\frac{\partial p}{\partial\sigma^2}(x;\Gamma^X(s))\,dx\biggr) d\Gamma^X(s) \notag\\
&= \frac{1}{2}\int_0^t \biggl(\int_\mathbb{R} \varphi(x)\frac{\partial^2 p}{\partial x^2}(x;\Gamma^X(s))\,dx\biggr) d\Gamma^X(s) \notag\\
&= \frac{1}{2}\int_0^t \biggl(\int_\mathbb{R} \varphi''(x)\,p(x;\Gamma^X(s))\,dx\biggr) d\Gamma^X(s) \notag\\
&= \frac{1}{2}\int_0^t \mathbb{E}[\varphi''(X_s)]\,d\Gamma^X(s). \label{eq:meanintegral}
\end{align}
The third equality uses integration by parts twice (the boundary terms vanish since $\varphi\in C^3_b$ and $p$ decays exponentially). Thus:
\begin{equation}\label{eq:meanresult}
\mathbb{E}[\varphi(X_t)] = \varphi(0) + \frac{1}{2}\int_0^t \mathbb{E}[\varphi''(X_s)]\,d\Gamma^X(s).
\end{equation}

\begin{remark}[The marginal density argument]\label{rem:marginal}
Step~2 never invokes pathwise increments, partition sums, Taylor expansions, or conditional independence of increments. It uses only the Gaussian marginal distribution $X_t \sim \mathcal{N}(0,\Gamma^X(t))$ and the heat equation~\eqref{eq:heat}. This is why the formula holds for \emph{all} Hurst parameters $H\in(0,1)$, including $H < 1/4$ where pathwise quadratic variation diverges: the proof bypasses pathwise regularity entirely.
\end{remark}

\begin{remark}[BV compatibility]\label{rem:BVcompat}
The argument above never assumes $\Gamma^X$ is differentiable. The Stieltjes integral $\int_0^t \mathbb{E}[\varphi''(X_s)]\,d\Gamma^X(s)$ is well-defined whenever $\Gamma^X \in BV([0,T])$ and $s \mapsto \mathbb{E}[\varphi''(X_s)]$ is continuous---both of which hold under our assumptions. When $\Gamma^X$ is absolutely continuous with density $\dot{\Gamma}^X$, the Stieltjes integral reduces to $\int_0^t \mathbb{E}[\varphi''(X_s)]\dot{\Gamma}^X(s)\,ds$.
\end{remark}

\medskip
\noindent\textbf{Step 3: Assembly via Clark--Ocone.}
Substituting~\eqref{eq:meanresult} into the global factorization~\eqref{eq:globalCO}:
\begin{equation}\label{eq:protoIto}
\varphi(X_t) = \varphi(0) + \frac{1}{2}\int_0^t \mathbb{E}[\varphi''(X_s)]\,d\Gamma^X(s) + \delta(\Pi D(\varphi(X_t))\cdot\mathbf{1}_{[0,t]}).
\end{equation}
It remains to show that the Clark--Ocone integrand $\Pi D(\varphi(X_t))$ decomposes to produce the running Skorokhod integral $\delta(\varphi'(X)\Pi DX\cdot\mathbf{1}_{[0,t]})$ and that the expectation $\mathbb{E}[\varphi''(X_s)]$ can be promoted to $\varphi''(X_s)$ inside the $L^2(\Omega)$ identity.

For the Clark--Ocone integrand: at each time $r \leq t$,
\[
(\Pi D(\varphi(X_t)))_r = \mathbb{E}[\varphi'(X_t)|\FF_r]\cdot K(t,r),
\]
where the factoring uses the deterministic kernel $DX_t = K(t,\cdot)$. By~\cite[Theorem~4.1]{DU}, the Gaussian integration-by-parts formula decomposes $\mathbb{E}[\varphi'(X_t)|\FF_r]$ as follows: writing $X_t = X_r^{(t)} + Z_r^{(t)}$ where $X_r^{(t)} := \mathbb{E}[X_t|\FF_r]$ is $\FF_r$-measurable and $Z_r^{(t)} := X_t - X_r^{(t)}$ is independent of $\FF_r$, the Gaussian conditional expectation expands as
\[
\mathbb{E}[\varphi'(X_t)|\FF_r] = \mathbb{E}[\varphi'(X_r^{(t)} + Z_r^{(t)})|\FF_r].
\]
This is a convolution of $\varphi'$ with a Gaussian kernel (the Mehler formula). As $r$ varies from $0$ to $t$, the conditional variance $\mathrm{Var}(Z_r^{(t)})$ decreases from $\Gamma^X(t)$ to $0$, and $\mathbb{E}[\varphi'(X_t)|\FF_r] \to \varphi'(X_t)$. The transition from $r$ to $r + dr$ produces a first-order term $\varphi'(X_r)\cdot K(t,r)$ (contributing to the Skorokhod integral) and a second-order correction $\frac{1}{2}\varphi''(X_r)\cdot K(t,r)$ (contributing to the It\^o correction). Integrating over $r \in [0,t]$ and collecting terms yields~\eqref{eq:Ito}. The detailed computation is carried out in~\cite[Theorem~4.1]{DU}. \qed

\begin{remark}[What the proof establishes]\label{rem:proofhonesty}
This proof establishes Theorem~\ref{thm:main} rigorously for centered Volterra Gaussian processes. Step~2 (the marginal density computation) is exact and elementary---it uses only the Gaussian distribution and the heat equation. Step~3 (the Clark--Ocone assembly) follows~\cite{DU} and uses the Gaussian Mehler formula. The abstract operator framework provides: (a)~the correct formula statement (the $d\Gamma^X$ correction), (b)~the structural explanation (non-commutativity of $\Pi$ with nonlinear functions), and (c)~the characterization of uniqueness (Theorem~\ref{thm:characterization}).
\end{remark}

\section{Characterization of the Fresh-Noise Energy}\label{sec:characterization}

We now formalize the claim that $d\Gamma^X$ is the \emph{unique} second-order correction measure compatible with the operator factorization.

\begin{theorem}[Characterization of the intrinsic bracket]\label{thm:characterization}
Let $X$ be a centered Volterra Gaussian process for which the operator It\^o formula (Theorem~\ref{thm:main}) holds. Suppose $\nu$ is a signed Borel measure on $[0,T]$ such that for all $\varphi\in C^3_b(\mathbb{R})$,
\begin{equation}\label{eq:Qformula}
\varphi(X_t) = \varphi(0) + \delta(\varphi'(X)\Pi DX\cdot\mathbf{1}_{[0,t]}) + \frac{1}{2}\int_0^t \varphi''(X_s)\,d\nu(s)
\end{equation}
holds in $L^2(\Omega)$ for all $t\in[0,T]$. Then $\nu = \Gamma^X$ (i.e., $d\nu = d\Gamma^X$) on $[0,T]$.
\end{theorem}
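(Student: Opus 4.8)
The plan is to subtract the two It\^o representations---the one from Theorem~\ref{thm:main} with correction measure $d\Gamma^X$, and the hypothesized one~\eqref{eq:Qformula} with correction measure $d\nu$---and show the difference forces $d(\nu - \Gamma^X) = 0$. Since the Skorokhod integral term $\delta(\varphi'(X)\Pi DX\cdot\mathbf{1}_{[0,t]})$ is \emph{identical} in both formulas, subtracting~\eqref{eq:Qformula} from~\eqref{eq:Ito} gives, for every $\varphi \in C^3_b(\mathbb{R})$ and every $t \in [0,T]$,
\begin{equation}\label{eq:chardiff}
\int_0^t \varphi''(X_s)\,d(\nu - \Gamma^X)(s) = 0 \quad \text{in } L^2(\Omega).
\end{equation}
Write $\mu := \nu - \Gamma^X$, a signed Borel measure of bounded variation on $[0,T]$. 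The goal is to conclude $\mu \equiv 0$.

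First I would take expectations in~\eqref{eq:chardiff}: since the identity holds in $L^2(\Omega)$, it holds after applying $\mathbb{E}$, giving $\int_0^t \mathbb{E}[\varphi''(X_s)]\,d\mu(s) = 0$ for all $t$ and all $\varphi \in C^3_b$. Because this holds for every $t \in [0,T]$, and the integrand $s \mapsto \mathbb{E}[\varphi''(X_s)]$ is continuous (by $L^2(\Omega)$-continuity of $X$ and boundedness of $\varphi''$), the distribution function $t \mapsto \int_0^t \mathbb{E}[\varphi''(X_s)]\,d\mu(s)$ is constant and zero; a standard argument (e.g.\ testing against continuous functions of $t$, or simply differentiating the Stieltjes integral) then yields
\begin{equation}\label{eq:charmeasure}
\mathbb{E}[\varphi''(X_s)]\,d\mu(s) = 0 \quad \text{as measures on } [0,T],
\end{equation}
for every $\varphi \in C^3_b(\mathbb{R})$. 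The remaining task is to choose $\varphi$ so that $\mathbb{E}[\varphi''(X_s)]$ is bounded away from zero, which would force $|\mu| = 0$.

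For the choice of test function: $X_s \sim \mathcal{N}(0,\Gamma^X(s))$, so $\mathbb{E}[\varphi''(X_s)] = \int_\mathbb{R} \varphi''(x)\,p(x;\Gamma^X(s))\,dx$. Taking $\varphi''(x) = g(x)$ for a smooth, strictly positive, rapidly decreasing $g$ (e.g.\ $g(x) = e^{-x^2}$, obtained as the second derivative of a suitable $C^3_b$ primitive---one must check the primitive can be chosen bounded with bounded derivatives up to order three, which holds since $g$ and its antiderivatives decay), we get $\mathbb{E}[\varphi''(X_s)] = \int_\mathbb{R} e^{-x^2} p(x;\Gamma^X(s))\,dx > 0$ for every $s$, and in fact this quantity is continuous and strictly positive on $[0,T]$ (with the convention that at any $s$ where $\Gamma^X(s) = 0$, $X_s = 0$ a.s.\ and the integrand is $g(0) = 1 > 0$). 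Hence $\inf_{s \in [0,T]} \mathbb{E}[\varphi''(X_s)] =: c > 0$, and~\eqref{eq:charmeasure} gives $c\,|\mu|([0,T]) \le \int_{[0,T]} \mathbb{E}[\varphi''(X_s)]\,d|\mu|(s) = 0$, so $\mu = 0$, i.e.\ $\nu = \Gamma^X$.

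The main obstacle is the passage from~\eqref{eq:chardiff} (an $L^2(\Omega)$-valued identity holding for each fixed $t$) to the measure identity~\eqref{eq:charmeasure}. Taking expectations is harmless, but one must be careful that ``$\int_0^t f\,d\mu = 0$ for all $t$ with $f$ continuous'' genuinely implies ``$f\,d\mu = 0$''; this is standard (the signed measure $f\,d\mu$ has vanishing distribution function, hence is the zero measure), but it should be stated cleanly, perhaps noting that continuity of $f$ is what rules out pathologies. A secondary technical point is verifying that the test function $\varphi$ with $\varphi'' = g$ can be taken in $C^3_b(\mathbb{R})$: one needs $\varphi, \varphi', \varphi'', \varphi'''$ all bounded, which requires choosing the two constants of integration so that $\varphi'$ and $\varphi$ do not grow; since $g \in L^1 \cap C^\infty$ with all derivatives integrable, setting $\varphi'(x) = \int_{-\infty}^x g(y)\,dy - \tfrac12\int_\mathbb{R} g$ makes $\varphi'$ bounded, and then $\varphi(x) = \int_0^x \varphi'(y)\,dy$ is bounded because $\varphi'$ is bounded and changes sign with integrable tails---a routine check. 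One could alternatively sidestep this by approximating $g$ by compactly supported bumps and using that $\mu$ is finite, but the direct construction is cleaner.
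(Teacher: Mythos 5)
Your overall strategy---subtract the two representations, cancel the common divergence term, take expectations, and reduce to showing that a fixed signed measure annihilated by a family of continuous densities must vanish---is sound and matches the paper's proof up to the final step. Taking expectations early is in fact a clean simplification. The gap is in your choice of test function, and it is not a ``routine check'' that fails quietly: \emph{no} function $\varphi\in C^3_b(\mathbb{R})$ can have $\varphi''=g$ for a strictly positive $g$. If $\varphi''\geq 0$ everywhere then $\varphi$ is convex, and a bounded convex function on $\mathbb{R}$ is constant, forcing $\varphi''\equiv 0$. Concretely, your construction with $g(x)=e^{-x^2}$ gives $\varphi'(x)=\int_{-\infty}^x g-\tfrac12\int_{\mathbb{R}}g\to\pm\tfrac12\sqrt{\pi}$ as $x\to\pm\infty$; these limits are nonzero constants, not ``integrable tails,'' so $\varphi(x)=\int_0^x\varphi'$ grows linearly and is unbounded. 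The claimed primitive simply does not lie in $C^3_b$, so the inequality $c\,|\mu|([0,T])\leq 0$ is never reached.

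The fix requires abandoning a single globally convex test function. The paper does this with the cutoff sequence $\varphi_n(x)=x^2\eta(x/n)$: each $\varphi_n\in C^3_b$ (it is compactly supported), $\varphi_n''$ is uniformly bounded and converges pointwise to $2$, and dominated convergence in your identity $\int_0^t\mathbb{E}[\varphi_n''(X_s)]\,d\mu(s)=0$ yields $2\,\mu([0,t])=0$ for every $t$, hence $\mu=0$. Alternatively, you can stay entirely within your own scheme by taking $\varphi(x)=\cos x$, which is genuinely in $C^3_b$: then $\mathbb{E}[\varphi''(X_s)]=-\mathbb{E}[\cos X_s]=-e^{-\Gamma^X(s)/2}$, which is continuous and bounded away from zero on $[0,T]$ because $\Gamma^X$ is bounded there, and your final estimate $c\,|\mu|([0,T])\leq\int_{[0,T]}|\mathbb{E}[\varphi''(X_s)]|\,d|\mu|(s)=0$ goes through verbatim. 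With either repair your argument is complete and essentially equivalent to the paper's.
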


\begin{proof}
By Theorem~\ref{thm:main}, the formula~\eqref{eq:Ito} holds with $\Gamma^X$ in place of $\nu$. Subtracting~\eqref{eq:Ito} from~\eqref{eq:Qformula},
\[
\frac{1}{2}\int_0^t \varphi''(X_s)\,d(\nu - \Gamma^X)(s) = 0 \qquad \text{in } L^2(\Omega)
\]
for all $\varphi\in C^3_b(\mathbb{R})$ and all $t\in[0,T]$. Taking $\varphi_n(x) = x^2\eta(x/n)$ where $\eta\in C^\infty_c(\mathbb{R})$ satisfies $\eta(x)=1$ for $|x|\leq 1$, we have $\varphi_n\in C^3_b(\mathbb{R})$ and $\varphi_n''(x)=2$ for $|x|\leq n$. Since $X_s\in L^2(\Omega)$, for $n$ sufficiently large $\varphi_n''(X_s)=2$ on a set of probability $\geq 1-\varepsilon$. Sending $n\to\infty$ via dominated convergence gives $(\nu - \Gamma^X)([0,t]) = 0$ for all $t$, whence $\nu = \Gamma^X$.
\end{proof}

\begin{remark}\label{rem:uniqueness}
Theorem~\ref{thm:characterization} states that once the divergence structure $\delta(\varphi'(X)\Pi DX\cdot\mathbf{1}_{[0,t]})$ is fixed as the stochastic integral term, the correction measure is \emph{uniquely determined}. This is the mathematical content behind the claim that $d\Gamma^X$ is canonical within the operator framework. The theorem does not assert that $\Gamma^X$ is independent of the choice of $(D,\delta,\Pi)$---different triples may yield different brackets. See Section~\ref{sec:repdep}.
\end{remark}

\section{Multivariate Extension}\label{sec:multi}

\begin{theorem}[Multivariate It\^o formula]\label{thm:multi}
Let $W$ be a standard Brownian motion and let $X = (X^1,\ldots,X^n)$ where each $X^i_t = \int_0^t K_i(t,s)\,dW_s$ is a centered Volterra Gaussian process driven by the same $W$, with each $K_i$ satisfying the Decreusefond--\"Ust\"unel conditions.
Let $\varphi\in C^3_b(\mathbb{R}^n)$. Then
\begin{align*}
\varphi(X_t) &= \varphi(0) + \sum_{i=1}^n \delta(\partial_i\varphi(X)\Pi DX^i\cdot\mathbf{1}_{[0,t]})\\
&\quad + \frac{1}{2}\sum_{i,j=1}^n \int_0^t \partial_i\partial_j\varphi(X_s)\,d\langle X^i, X^j\rangle^{(D,\Pi)}_s.
\end{align*}
\end{theorem}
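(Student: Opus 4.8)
\medskip
\noindent\textbf{Proof proposal.}
The plan is to run the three-step proof of Theorem~\ref{thm:main} verbatim at the vector level, the only genuinely new analytic ingredient being the multivariate Gaussian heat equation; throughout we work with the single triple $(D,\delta,\Pi)$ attached to the common driver $W$. Since $W$ drives every component, $X_t=(X^1_t,\dots,X^n_t)$ is centered Gaussian with covariance matrix $\Sigma(t)$, and
\[
\Sigma_{ij}(t)=\mathbb{E}[X^i_tX^j_t]=\inner{\Pi DX^i_t}{\Pi DX^j_t}_\HH=:\langle X^i,X^j\rangle^{(D,\Pi)}_t,
\]
the middle equality polarizing the Gaussian identity $\Gamma^{X^i}(t)=\mathbb{E}[(X^i_t)^2]$ of Definition~\ref{def:bracket} (using linearity of $D$ and $\Pi$). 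Each $\Sigma_{ij}$ is continuous (by $L^2$-continuity of $X$) and of bounded variation (Assumption~\ref{ass:energy} for $i=j$; for $i\neq j$ by polarization, $2\Sigma_{ij}=\Gamma^{X^i+X^j}-\Gamma^{X^i}-\Gamma^{X^j}$, with $X^i+X^j$ again a Volterra Gaussian process driven by $W$ with kernel $K_i+K_j$). Step~1 is unchanged: applying the factorization~\eqref{eq:factor} to $G=\varphi(X_t)$ gives $\varphi(X_t)=\mathbb{E}[\varphi(X_t)]+\delta(\Pi D(\varphi(X_t))\cdot\mathbf{1}_{[0,t]})$, and the multivariate chain rule (Assumption~\ref{ass:chain}) gives $D(\varphi(X_t))=\sum_i\partial_i\varphi(X_t)K_i(t,\cdot)$, whose predictable projection at time $r$ is $\sum_i\mathbb{E}[\partial_i\varphi(X_t)\mid\FF_r]\,K_i(t,r)$ since each $K_i(t,\cdot)$ is deterministic.

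For Step~2, put $m(t):=\mathbb{E}[\varphi(X_t)]=\int_{\mathbb{R}^n}\varphi(x)\,p(x;\Sigma(t))\,dx$ with $p(\cdot;\Sigma)$ the $\mathcal{N}(0,\Sigma)$ density. From $\widehat p(\xi;\Sigma)=\exp(-\tfrac12\xi^\top\Sigma\xi)$ one reads off, along a differentiable path $\Sigma(\cdot)$, the multivariate heat equation
\[
\partial_t\,p(x;\Sigma(t))=\tfrac12\sum_{i,j}\dot\Sigma_{ij}(t)\,\partial_{x_i}\partial_{x_j}p(x;\Sigma(t)).
\]
Applying the Stieltjes chain rule to $m$ as a $C^1$ function of the continuous functions of bounded variation $(\Sigma_{ij})_{i\le j}$ and integrating by parts twice in $x$ (boundary terms vanish: $\varphi\in C^3_b$, $p$ Schwartz) yields
\[
\mathbb{E}[\varphi(X_t)]=\varphi(0)+\tfrac12\sum_{i,j}\int_0^t\mathbb{E}[\partial_i\partial_j\varphi(X_s)]\,d\Sigma_{ij}(s),
\]
with $s\mapsto\mathbb{E}[\partial_i\partial_j\varphi(X_s)]$ continuous by $L^2$-continuity of $X$ and boundedness of $\partial_i\partial_j\varphi$. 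This is the exact analogue of Step~2 of Theorem~\ref{thm:main}, with a finite sum over pairs.

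Step~3 is the multivariate Clark--Ocone assembly. Because all $X^i$ share the driver $W$, the decomposition $X_t=X^{(t)}_r+Z^{(t)}_r$ with $X^{(t)}_r:=\mathbb{E}[X_t\mid\FF_r]$ ($\FF_r$-measurable) and $Z^{(t)}_r:=X_t-X^{(t)}_r$ (jointly Gaussian, independent of $\FF_r$) holds simultaneously in all coordinates, with conditional covariance decreasing to $0$ in the PSD order as $r\uparrow t$. The multivariate Mehler formula expresses $\mathbb{E}[\partial_i\varphi(X_t)\mid\FF_r]$ as a Gaussian convolution of $\partial_i\varphi$; differentiating in $r$ (the vector analogue of the expansion in \cite[Thm.~4.1]{DU}) produces first-order pieces $\partial_i\varphi(X_r)K_i(t,r)$ that reassemble into $\sum_i\delta(\partial_i\varphi(X)\Pi DX^i\cdot\mathbf{1}_{[0,t]})$ and second-order pieces $\tfrac12\sum_j\partial_i\partial_j\varphi(X_r)$ tested against the cross-covariance differentials. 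Combined with the mean correction from Step~2, the second-order contributions promote $\mathbb{E}[\partial_i\partial_j\varphi(X_s)]$ to $\partial_i\partial_j\varphi(X_s)$ inside the $L^2(\Omega)$ identity, giving the $\tfrac12\sum_{i,j}\int_0^t\partial_i\partial_j\varphi(X_s)\,d\langle X^i,X^j\rangle^{(D,\Pi)}_s$ correction and hence the stated formula.

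The step I expect to be the main obstacle is the second-order bookkeeping in Step~3: confirming that the off-diagonal ($i\neq j$) terms emitted by the multivariate Mehler expansion appear against $d\langle X^i,X^j\rangle^{(D,\Pi)}_s$ with exactly the coefficient $\tfrac12$ in the statement, with no spurious symmetrization factor and the cross-terms correctly paired. A clean way to pin this down---and a self-contained alternative route that invokes only the already-proven scalar result---is to test against ridge functions $\varphi(x)=\psi(\theta\cdot x)$, $\psi\in C^3_b(\mathbb{R})$, $\theta\in\mathbb{R}^n$: then $Y^\theta:=\theta\cdot X=\int_0^{\cdot}\bigl(\sum_i\theta_iK_i(\cdot,s)\bigr)\,dW_s$ is a centered Volterra Gaussian process driven by $W$ to which (the proof of) Theorem~\ref{thm:main} applies---it uses only the inherited Assumptions~\ref{ass:chain}--\ref{ass:energy}, Gaussianity of $Y^\theta_t$, and determinism of $DY^\theta_t=K^\theta(t,\cdot)$---and since $\partial_i[\psi(\theta\cdot x)]=\theta_i\psi'(\theta\cdot x)$, $\partial_i\partial_j[\psi(\theta\cdot x)]=\theta_i\theta_j\psi''(\theta\cdot x)$, and $\Gamma^{Y^\theta}(s)=\norm{\sum_i\theta_i\Pi DX^i_s}_\HH^2=\sum_{i,j}\theta_i\theta_j\langle X^i,X^j\rangle^{(D,\Pi)}_s$, the scalar formula for $\psi(Y^\theta_t)$ is exactly the asserted multivariate formula specialized to $\varphi(x)=\psi(\theta\cdot x)$. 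Both sides of the identity are linear in $\varphi$ and stable under $C^3$-convergence on compacts with uniform derivative bounds (the Skorokhod term via the divergence isometry, the Stieltjes term by dominated convergence), so a standard mollification and Fourier-approximation argument extends the identity from ridge functions to all $\varphi\in C^3_b(\mathbb{R}^n)$; this also exhibits the multivariate correction as the polarization of the scalar one.
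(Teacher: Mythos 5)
Your proposal is correct, and both of your routes differ from the paper's own argument, which is a two-sentence sketch: ``induction on $n$ and the univariate formula,'' with the cross-bracket identified via the polarized orthogonality identity $\inner{DX^i}{\Pi DX^j}_\HH = \inner{\Pi DX^i}{\Pi DX^j}_\HH$. Your first route (rerunning Steps 1--3 of Theorem~\ref{thm:main} at the vector level with the multivariate heat equation $\partial_t p = \tfrac12\sum_{i,j}\dot\Sigma_{ij}\partial_{x_i}\partial_{x_j}p$) is the honest direct generalization and makes explicit what the paper's induction would actually have to do; it inherits the same mild informalities as the paper's scalar Step~2 (degenerate $\Sigma(s)$ at $s=0$ or when the $X^i$ are linearly dependent, and the Stieltjes chain rule for merely BV covariance entries). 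Your second route---ridge functions $\varphi=\psi(\theta\cdot\,\cdot\,)$ reducing to the already-proven scalar theorem for $Y^\theta=\theta\cdot X$---is arguably cleaner than the paper's proof: it derives the off-diagonal terms with the correct coefficient $\tfrac12$ purely by polarization of $\Gamma^{Y^\theta}=\sum_{i,j}\theta_i\theta_j\langle X^i,X^j\rangle^{(D,\Pi)}$ and linearity of $\delta$, without touching the multivariate Mehler expansion. Two small points to tie down in that route: (i) verify that $K^\theta=\sum_i\theta_iK_i$ satisfies the Decreusefond--\"Ust\"unel conditions (it does, with $H=\min_iH_i$ in condition (2) of Definition~\ref{def:DU}, since $|t-s|^{H_i-1/2}\le T^{H_i-H}|t-s|^{H-1/2}$ on $[0,T]$); and (ii) in the passage from ridge functions to general $\varphi\in C^3_b(\mathbb{R}^n)$, the identification of the limiting divergence term should invoke the closedness of $\delta$ (all other terms in the identity converge in $L^2(\Omega)$, so $\delta(u_n)$ converges and closedness identifies the limit as $\delta(u)$), rather than an isometry, since the Skorokhod isometry for Volterra drivers carries a trace correction (Proposition~\ref{prop:isomVG}).
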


\begin{proof}
The proof follows by induction on $n$ and the univariate formula (Theorem~\ref{thm:main}). The cross-bracket $\inner{\Pi DX^i}{\Pi DX^j}_\HH$ emerges from $\inner{DX^i}{\Pi DX^j}_\HH = \inner{\Pi DX^i}{\Pi DX^j}_\HH$ by orthogonality of $\Pi$.
\end{proof}

\begin{definition}[Matrix-valued intrinsic bracket]\label{def:matbracket}
Define $\langle X^i, X^j\rangle^{(D,\Pi)}_t := \inner{\Pi DX^i_t}{\Pi DX^j_t}_\HH$, where $D$ is the Malliavin derivative with respect to the common Brownian motion $W$ and $\HH = L^2([0,T])$. Since $DX^i_t = K_i(t,\cdot)$ is deterministic, the cross-bracket equals $\langle X^i, X^j\rangle^{(D,\Pi)}_t = \int_0^t K_i(t,r)K_j(t,r)\,dr = \mathbb{E}[X^i_t X^j_t]$, and is deterministic.
\end{definition}

\section{Volterra Gaussian Processes}\label{sec:volterra}

We verify that Volterra Gaussian processes satisfy the operator axioms. This section contains no new decompositions---the constructions are standard \cite{DU,Nualart}---but serves as the structural bridge connecting $\langle Y\rangle^{(D,\Pi)}$ to classical objects. The operator framework is axiomatically independent of Malliavin calculus; the Volterra--Gaussian setting is one concrete realization.

Let $W = (W_t)_{t\in[0,T]}$ be standard Brownian motion on $(\Omega,\FF,P)$ and let $K:\{(t,s):0\leq s\leq t\leq T\}\to\mathbb{R}$ be a deterministic Volterra kernel.

\begin{definition}[Decreusefond--\"Ust\"unel conditions]\label{def:DU}
A Volterra kernel $K$ satisfies the \emph{Decreusefond--\"Ust\"unel conditions} \cite{DU} if:
(1)~$\int_0^T\int_0^t K(t,s)^2\,ds\,dt < \infty$;
(2)~there exists $H\in(0,1)$ such that $|K(t,s)|\leq C|t-s|^{H-1/2}$ for $s<t$;
(3)~$\partial_t K(t,s)$ exists and satisfies integrability conditions ensuring $\Dom(\delta^X)$ is dense.
\end{definition}

Define $X_t := \int_0^t K(t,s)\,dW_s$, with covariance
\[
R^X(t,u) := \mathbb{E}[X_t X_u] = \int_0^{t\wedge u} K(t,s)K(u,s)\,ds.
\]
The Cameron--Martin space $\HH_X$ is the completion of $\{K(t,\cdot): t\in[0,T]\}$ under $\inner{K(t,\cdot)}{K(u,\cdot)}_{\HH_X} := R^X(t,u)$. For each $t\in[0,T]$, the \emph{canonical representer} is $k_t := K(t,\cdot)\in\HH_X$. The Malliavin derivative $D^X$, its adjoint $\delta^X$, and the predictable projection $\Pi^X$ (orthogonal projection onto $\HH^t_X := \spn\{k_s: s\leq t\}$) are defined as in~\cite{Nualart}.

\begin{proposition}[Verification of axioms]\label{prop:voltverify}
Under the Decreusefond--\"Ust\"unel conditions, $(D^X,\delta^X,\Pi^X)$ satisfies Assumptions~\ref{ass:chain}--\ref{ass:energy}. Moreover, $D^X X_t = k_t$ for each $t$, where $k_t := K(t,\cdot)\in\HH_X$.
\end{proposition}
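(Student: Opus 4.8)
The plan is to verify Assumptions~\ref{ass:chain}--\ref{ass:energy} one at a time for the triple $(D^X,\delta^X,\Pi^X)$ built from the Malliavin calculus of the driving Brownian motion $W$, with $\HH = \HH_X$ the Cameron--Martin space. The starting point is the standard fact that for a centered Volterra Gaussian process $X_t = \int_0^t K(t,s)\,dW_s$, the map $k_t := K(t,\cdot) \mapsto X_t$ extends to an isometry from $\HH_X$ onto the first Wiener chaos generated by $X$, and the Malliavin derivative $D^X$ (in the direction of $\HH_X$) acts on this chaos by $D^X X_t = k_t$, deterministic. I would record this identity first, since it is asserted in the proposition and is used everywhere else.

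Next I would check the four structural assumptions. For Assumption~\ref{ass:chain} (chain rule core), take $\AAA$ to be the algebra of smooth cylindrical functionals $\varphi(X_{t_1},\dots,X_{t_m})$ with $\varphi\in C^\infty_b$; density in $L^2(\Omega)$ and the chain rule $D^X\varphi(F) = \varphi'(F)D^XF$ are standard properties of the Malliavin derivative on a Gaussian space (see \cite{Nualart}), inherited from the Wiener-space chain rule through the isometry. For Assumption~\ref{ass:factor} (operator factorization $(\Id-\mathbb{E})F = \delta^X(\Pi^X D^XF)$): this is precisely the Clark--Ocone formula for the Gaussian filtration $\FF_t = \sigma(X_s : s\le t)$, where $\Pi^X$ is the conditional-expectation / predictable projection onto $\HH^t_X$; I would cite \cite{FonI} where this was established abstractly, and note that for the Gaussian case it is the classical Clark--Ocone--Haussmann identity $F - \mathbb{E}[F] = \int_0^T \mathbb{E}[D^X_s F \mid \FF_s]\,\delta^X(ds)$ rewritten in operator form. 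For Assumption~\ref{ass:ortho} (orthogonality of $\Pi^X$): $\Pi^X$ is by construction the orthogonal projection of $L^2(\Omega;\HH_X)$ onto the closed subspace of adapted elements (equivalently, fiberwise onto $\HH^t_X$ composed with conditional expectation), hence self-adjoint; I would spell this out by noting that on the Gaussian space predictable projection coincides with $L^2$-orthogonal projection onto the adapted subspace, a consequence of the martingale structure.

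For Assumption~\ref{ass:energy} (energy regularity), the key computation is $\Gamma^X(t) = \norm{\Pi^X D^X X_t}_{\HH_X}^2 = \norm{D^X X_t}_{\HH_X}^2 = \norm{k_t}_{\HH_X}^2 = R^X(t,t) = \mathbb{E}[X_t^2]$, where the second equality uses that $D^X X_t = k_t$ is already adapted (supported on $[0,t]$), so $\Pi^X$ acts as the identity on it; this is the same observation as in Lemma~\ref{lem:ortho}. Then bounded variation of $t\mapsto R^X(t,t)$ on $[0,T]$ needs to be deduced from the Decreusefond--\"Ust\"unel conditions. This is the step I expect to be the main obstacle: condition~(2) gives only $|K(t,s)| \le C|t-s|^{H-1/2}$, which bounds $R^X(t,t)$ but does not immediately control its variation; I would use condition~(3) (existence of $\partial_t K$ with its integrability) to write $R^X(t,t)$ via Leibniz differentiation under the integral, $\frac{d}{dt}R^X(t,t) = K(t,t^-)^2 + 2\int_0^t K(t,s)\,\partial_t K(t,s)\,ds$ when the boundary term makes sense, or more robustly observe that $R^X(t,t) = \mathbb{E}[X_t^2]$ is monotone-up-to-bounded-variation because $X$ has a representation against a martingale --- concretely, if $M_t := \int_0^t K(T,s)\,dW_s$ or the Volterra martingale associated to $K$, then $R^X(t,t)$ inherits BV from the quadratic variation structure. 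I would cite Proposition~\ref{prop:Hurstinvariant} for the identity $\Gamma^X(t) = R^X(t,t)$ and argue BV by reducing to the covariance regularity already implicit in the Decreusefond--\"Ust\"unel framework; for fBM the explicit form $R^X(t,t) = t^{2H}$ is manifestly BV (indeed $C^1$ on $(0,T]$), which serves as the prototype. The rest --- $D^X X_t = k_t$ --- is then immediate from the first paragraph, completing the verification.
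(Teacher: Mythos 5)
Your proposal is correct and follows essentially the same route as the paper: cylindrical functionals for the chain rule, citing \cite{FonI} for the factorization, orthogonality of $\Pi^X$ by construction, and the direct computation $\Gamma^X(t)=\norm{k_t}_{\HH_X}^2=R^X(t,t)$. You are in fact more careful than the paper on the one genuinely delicate point --- deducing bounded variation of $t\mapsto R^X(t,t)$ from the Decreusefond--\"Ust\"unel conditions, which the paper simply asserts --- though note that the reason $\Pi^X k_t=k_t$ is membership of $k_t$ in $\HH^t_X=\spn\{k_s:s\le t\}$ (as in Lemma~\ref{lem:fBMenergy}) rather than Lemma~\ref{lem:ortho}, which states a different identity.
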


\begin{proof}
The chain rule is immediate from the Malliavin derivative on cylindrical functionals. The factorization is \cite{FonI}, Theorem~4.2. Orthogonality of $\Pi^X$ is by construction. For energy regularity (Assumption~\ref{ass:energy}): $\Gamma^X(t) = \norm{\Pi^X D^X X_t}_{\HH_X}^2 = \norm{k_t}_{\HH_X}^2 = R^X(t,t) = \mathbb{E}[X_t^2]$, which is of bounded variation when $t\mapsto R^X(t,t)$ is (as ensured by the Decreusefond--\"Ust\"unel kernel conditions).
\end{proof}

\begin{proposition}[Skorokhod isometry for Volterra Gaussian]\label{prop:isomVG}
Under the Decreusefond--\"Ust\"unel conditions, the Skorokhod isometry with trace correction holds~\cite[Proposition~1.3.5]{DU}: for $u \in \Dom(\delta^X)$,
\[
\mathbb{E}[(\delta^X(u))^2] = \mathbb{E}[\norm{u}_{\HH_X}^2] + \mathbb{E}[\norm{D^X u}^2_{\mathrm{HS}}],
\]
where $\norm{D^X u}_{\mathrm{HS}}$ denotes the Hilbert--Schmidt norm of the two-parameter Malliavin derivative of $u$. For adapted processes $u$ satisfying H\"older regularity, the trace correction $\mathbb{E}[\norm{D^Xu}^2_{\mathrm{HS}}]$ is controlled: over a short interval $(t_k,t_{k+1}]$, it is $O(|\Delta t|^{1+\gamma})$ where $\gamma$ depends on the regularity of $u$. For standard Brownian motion ($H=1/2$), the trace correction vanishes identically for adapted $u$, recovering the classical It\^o isometry.
\end{proposition}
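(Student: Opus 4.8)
The statement is, in substance, \cite[Proposition~1.3.5]{DU}; the plan is to derive it by transferring the question to Wiener space, and then to record the two stated consequences. First I would introduce the Decreusefond--\"Ust\"unel transfer operator $K^*\colon\HH_X\to L^2([0,T])$, the isometry determined by $K^*k_t=\mathbf{1}_{[0,t]}$, under which $\delta^X(u)=\delta^W(K^*u)$ and $K^*\circ D^X=D^W$ on the relevant smoothness class; the integrability hypothesis on $\partial_tK$ in Definition~\ref{def:DU}(3) is exactly what makes $K^*$ well-defined on step functions and then extends it (together with $K^*\otimes K^*$ on the tensor product) to $\Dom(\delta^X)$ and to the Malliavin domain $\mathbb{D}^{1,2}(\HH_X)$. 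Then I would apply the classical Malliavin--Skorokhod isometry with respect to $W$ (\cite[Prop.~1.3.1]{Nualart}),
\[
\mathbb{E}\bigl[\delta^W(v)^2\bigr]=\mathbb{E}\bigl[\norm{v}_{L^2([0,T])}^2\bigr]+\mathbb{E}\Bigl[\int_0^T\!\!\int_0^T D^W_s v_t\,D^W_t v_s\,ds\,dt\Bigr],
\]
to $v=K^*u$, and transport back along $K^*$: the first term becomes $\mathbb{E}[\norm{u}_{\HH_X}^2]$, and the double integral becomes the trace $\mathbb{E}[\operatorname{Tr}((D^Xu)\circ(D^Xu))]$ of the two-parameter derivative regarded as a Hilbert--Schmidt operator on $\HH_X$ --- the ``trace correction.'' This equals $\mathbb{E}[\norm{D^Xu}_{\mathrm{HS}}^2]$ when $D^Xu$ is symmetric, and in general it controls $\mathbb{E}[\delta^X(u)^2]-\mathbb{E}[\norm{u}_{\HH_X}^2]$ from above via Cauchy--Schwarz, which is the form used downstream.

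For the two consequences I would argue locally on a partition. If $u$ is adapted and $\gamma$-H\"older in $\mathbb{D}^{1,2}$, then on $(t_k,t_{k+1}]$ adaptedness forces $D^W_r u_s=0$ for $r>s$, so I would write $u_s=u_{t_k}+(u_s-u_{t_k})$ with $\norm{u_s-u_{t_k}}_{\mathbb{D}^{1,2}}=O(|\Delta t_k|^\gamma)$ uniformly on the interval, and integrate the trace integrand over $(t_k,t_{k+1}]^2$; the singular factors produced by $D^X$ are dominated using the kernel bound $|K(t,s)|\le C|t-s|^{H-1/2}$ of Definition~\ref{def:DU}(2), which is integrable over the square (since $H>0$) and contributes an extra factor $|\Delta t_k|$, for a net bound $O(|\Delta t_k|^{1+\gamma})$; summing, $\sum_k|\Delta t_k|^{1+\gamma}\le|\text{mesh}|^\gamma\,T\to0$, so the trace correction is negligible in the Riemann-sum limit, which is the precise meaning of ``controlled.'' For $H=1/2$ one has $K(t,s)=\mathbf{1}_{[0,t]}(s)$, $\HH_X=L^2([0,T])$, $K^*=\Id$, $D^X=D^W$, and adaptedness gives $D^W_ru_s=0$ for $r>s$; hence for Lebesgue-a.e.\ $(r,s)$ with $r\neq s$ at least one of $D^W_ru_s$, $D^W_su_r$ vanishes, the trace integrand $D^W_ru_s\,D^W_su_r$ is zero a.e., and the correction vanishes identically, recovering the classical It\^o isometry $\mathbb{E}[\delta^W(u)^2]=\mathbb{E}[\int_0^Tu_s^2\,ds]$ for adapted integrands.

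The main obstacle is bookkeeping rather than conceptual. Two points need care: (i)~checking that $K^*$ and $K^*\otimes K^*$ genuinely preserve the domains $\Dom(\delta^X)$ and $\mathbb{D}^{1,2}(\HH_X)$ --- this is precisely where Definition~\ref{def:DU}(3) earns its keep, and it is the only input not already contained in classical Wiener-space Malliavin calculus; and (ii)~making the short-interval H\"older estimate uniform in $k$, which requires tracking how the constant in $\int\!\!\int_{(t_k,t_{k+1}]^2}|t-r|^{H-1/2}|s-r|^{H-1/2}\,dr\,ds$ behaves: it degenerates as $H\downarrow0$ but stays finite for every $H\in(0,1)$, and one should carry the $H$-dependence explicitly since the whole point is uniformity over the low-regularity range. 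If one is content to cite \cite[Proposition~1.3.5]{DU} for the isometry itself (as the statement already does), the proof reduces to the two short arguments of the previous paragraph.
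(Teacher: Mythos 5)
The paper offers no proof of this proposition: it is stated as a citation of \cite[Proposition~1.3.5]{DU}, so there is nothing internal to compare against. Your transfer-operator argument is the standard route (it is essentially how Decreusefond--\"Ust\"unel and Al\`os--Mazet--Nualart establish the result), and the two local consequences are argued correctly. One notational slip: with the paper's convention $k_t=K(t,\cdot)$ and $\inner{k_t}{k_u}_{\HH_X}=R(t,u)$, the isometry runs $\mathbf{1}_{[0,t]}\mapsto K(t,\cdot)$, from the completion of indicator functions under $R$ into $L^2([0,T])$, i.e.\ the opposite direction from your $K^*k_t=\mathbf{1}_{[0,t]}$; this does not affect the structure of the argument. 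More importantly, you have caught a real imprecision in the statement itself: the Wiener-space isometry transported along the transfer operator yields
\[
\mathbb{E}[(\delta^X(u))^2]=\mathbb{E}[\norm{u}_{\HH_X}^2]+\mathbb{E}\bigl[\operatorname{Tr}\bigl((D^Xu)\circ(D^Xu)\bigr)\bigr],
\]
with the trace of the composition, not the Hilbert--Schmidt norm $\mathbb{E}[\norm{D^Xu}^2_{\mathrm{HS}}]$; the two agree only when $D^Xu$ is symmetric, and in general Cauchy--Schwarz gives only an upper bound. Your version is the one that makes the final clause of the proposition true: for $H=1/2$ and adapted $u$ the trace integrand $D^W_ru_s\,D^W_su_r$ vanishes a.e.\ off the diagonal, whereas $\norm{D^Wu}^2_{\mathrm{HS}}=\int_0^T\!\!\int_0^T(D^W_ru_s)^2\,dr\,ds$ does not (take $u_s=W_s$). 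So the proposal is correct and supplies a proof where the paper supplies only a reference; the proposition as printed should be amended to the trace form for its last sentence to hold.
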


\subsection{Fractional Brownian motion}

\begin{example}[Fractional Brownian motion]\label{ex:fBM}
For $H\in(0,1)$, fractional Brownian motion $B^H$ admits a Volterra representation $B^H_t = \int_0^t K_H(t,s)\,dW_s$. For $H\in(0,\tfrac{1}{2})$, the kernel $K_H$ satisfies the Decreusefond--\"Ust\"unel conditions \cite{DU}. For $H=\tfrac{1}{2}$, $K_{1/2}(t,s) = \mathbf{1}_{[0,t]}(s)$ and $B^{1/2}$ is standard Brownian motion. For $H\in(\tfrac{1}{2},1)$, $K_H$ is bounded and satisfies more regular versions of the DU conditions.
\end{example}

\begin{lemma}[Energy function for fBM]\label{lem:fBMenergy}
For fractional Brownian motion $B^H$ with Hurst parameter $H\in(0,1)$,
\[
\Gamma^{B^H}(t) = \norm{k_t}_{\HH_H}^2 = R^H(t,t) = t^{2H}.
\]
\end{lemma}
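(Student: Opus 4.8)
The claim is an identity for the energy function of fractional Brownian motion, namely $\Gamma^{B^H}(t) = t^{2H}$. The plan is to reduce everything to the already-established identity $\Gamma^Y(t) = \norm{\Pi DY_t}_\HH^2 = R^Y(t,t) = \mathbb{E}[Y_t^2]$ for Volterra Gaussian processes (Proposition~\ref{prop:voltverify}) and then invoke the defining covariance of fBM. First I would note that for $H \in (0,\tfrac{1}{2})$, Example~\ref{ex:fBM} furnishes a Volterra representation $B^H_t = \int_0^t K_H(t,s)\,dW_s$ whose kernel $K_H$ satisfies the Decreusefond--\"Ust\"unel conditions, so Proposition~\ref{prop:voltverify} applies verbatim and gives $D^{B^H} B^H_t = k_t = K_H(t,\cdot) \in \HH_H$ together with $\Gamma^{B^H}(t) = \norm{\Pi^{B^H} D^{B^H} B^H_t}_{\HH_H}^2 = \norm{k_t}_{\HH_H}^2 = R^H(t,t)$. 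For $H = \tfrac{1}{2}$ the kernel is $K_{1/2}(t,s) = \mathbf{1}_{[0,t]}(s)$ and the same chain of equalities holds with $\HH_H = L^2([0,T])$; for $H \in (\tfrac{1}{2},1)$ Example~\ref{ex:fBM} records that $K_H$ satisfies a regular version of the DU conditions, so again the identity $\Gamma^{B^H}(t) = R^H(t,t)$ is in force.

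The only remaining ingredient is the evaluation $R^H(t,t) = t^{2H}$, which is simply the variance of $B^H_t$ read off from the standard fBM covariance $R^H(s,u) = \tfrac{1}{2}(s^{2H} + u^{2H} - |s-u|^{2H})$; setting $s = u = t$ collapses this to $t^{2H}$. Equivalently, one may verify it directly from the kernel via $R^H(t,t) = \int_0^t K_H(t,r)^2\,dr$, but since fBM is \emph{defined} by its covariance (or normalized so that $\mathbb{E}[(B^H_t)^2] = t^{2H}$), the computation is a triviality and need not be reproduced. I would therefore present the proof as a two-line assembly: cite Proposition~\ref{prop:voltverify} for $\Gamma^{B^H}(t) = \norm{k_t}_{\HH_H}^2 = R^H(t,t)$, and then substitute the known value $R^H(t,t) = t^{2H}$.

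There is no substantive obstacle here; the statement is genuinely a corollary of earlier results, and the role of the lemma is to make the fBM energy scaling explicit for later use (e.g.\ in the Hurst-scaling discussion and the worked example $(B^{1/4}_t)^2$). The one point requiring a word of care is that Proposition~\ref{prop:voltverify} is stated under the Decreusefond--\"Ust\"unel conditions, which Example~\ref{ex:fBM} confirms for $H < \tfrac{1}{2}$ and (in a stronger form) for $H > \tfrac{1}{2}$, while $H = \tfrac{1}{2}$ is the classical Brownian case where $\HH_H = L^2([0,T])$ and everything is elementary; I would dispatch the $H = \tfrac{1}{2}$ case by direct inspection rather than by appeal to the Volterra machinery. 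Thus the proof handles $H \in (0,1)$ by a uniform argument with a trivial special case.
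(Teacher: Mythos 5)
Your proposal is correct and follows essentially the same route as the paper: identify $D^H B^H_t = k_t$ with $\Pi^H k_t = k_t$, so $\Gamma^{B^H}(t) = \norm{k_t}_{\HH_H}^2 = R^H(t,t)$, and then read off $t^{2H}$ from the fBM covariance. Your extra care about the three Hurst regimes is harmless but not needed beyond what Example~\ref{ex:fBM} and Proposition~\ref{prop:voltverify} already supply.
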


\begin{proof}
$D^H B^H_t = k_t = K_H(t,\cdot)$, $\Pi^H k_t = k_t$ (as $k_t\in\HH^t_H$), so $\Gamma^{B^H}(t) = \norm{k_t}_{\HH_H}^2 = \inner{k_t}{k_t}_{\HH_H} = R^H(t,t) = \mathbb{E}[(B^H_t)^2] = t^{2H}$.
\end{proof}

\begin{lemma}[Bracket for fBM]\label{lem:fBMbracket}
$\langle B^H\rangle^{(D,\Pi)}_t = t^{2H}$, with absolutely continuous energy density $d\Gamma^{B^H}(s) = 2Hs^{2H-1}\,ds$.
\end{lemma}

\begin{proof}
The intrinsic bracket is $\langle B^H\rangle^{(D,\Pi)}_t = \Gamma^{B^H}(t) = t^{2H}$ by Lemma~\ref{lem:fBMenergy}. Differentiating: $\dot{\Gamma}^{B^H}(s) = 2Hs^{2H-1}$.
\end{proof}

\section{Representation Dependence of the Intrinsic Bracket}\label{sec:repdep}

The intrinsic bracket $\langle Y\rangle^{(D,\Pi)}_t$ is defined in terms of a specific operator triple $(D,\delta,\Pi)$. A natural question is whether this object depends on the choice of representation. We give a complete answer: under a fixed driving martingale, the bracket is representation-invariant (Proposition~\ref{prop:fixeddriver}). Under different drivers, it is not, and we explain precisely why this is the correct behavior.

\subsection{Representation invariance under fixed driver}

\begin{proposition}[Representation invariance under fixed driver]\label{prop:fixeddriver}
Let $M$ be a continuous square-integrable martingale with bracket measure $\mu$ (i.e., $\langle M\rangle_t = \int_0^t d\mu_s$). Let $K_1,K_2$ be two causal Volterra kernels such that the process $X_t := \int_0^t K_1(t,s)\,dM_s$ also satisfies
\begin{equation}\label{eq:sameprocess}
X_t = \int_0^t K_2(t,s)\,dM_s \quad \text{for all } t\in[0,T],\;\text{a.s.}
\end{equation}
Assume both kernels satisfy the Decreusefond--\"Ust\"unel conditions with respect to $\mu$. Then
\begin{equation}\label{eq:bracketinvariance}
\langle X\rangle^{(D^{K_1},\Pi^{K_1})}_t = \langle X\rangle^{(D^{K_2},\Pi^{K_2})}_t \quad \text{for all } t\in[0,T].
\end{equation}
\end{proposition}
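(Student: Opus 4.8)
The plan is to show that both intrinsic brackets coincide with the deterministic function $t \mapsto \mathbb{E}[X_t^2] = R^X(t,t)$, which is manifestly independent of the kernel representation since it depends only on the law of the process $X$. By Definition~\ref{def:bracket} and Proposition~\ref{prop:voltverify}, for $i = 1, 2$ we have $\langle X\rangle^{(D^{K_i},\Pi^{K_i})}_t = \norm{\Pi^{K_i} D^{K_i} X_t}_{\HH_{K_i}}^2$. The key observation is that $D^{K_i} X_t$ is the canonical representer $k^{(i)}_t = K_i(t,\cdot)$ inside the Cameron--Martin space $\HH_{K_i}$ attached to the $i$-th representation, and that $k^{(i)}_t$ already lies in the adapted subspace $\HH^t_{K_i} = \spn\{k^{(i)}_s : s \leq t\}$ because $K_i$ is causal (so $K_i(t,s) = 0$ for $s > t$). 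Hence $\Pi^{K_i} k^{(i)}_t = k^{(i)}_t$, and $\norm{\Pi^{K_i} D^{K_i} X_t}_{\HH_{K_i}}^2 = \norm{k^{(i)}_t}_{\HH_{K_i}}^2 = \inner{k^{(i)}_t}{k^{(i)}_t}_{\HH_{K_i}}$.

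First I would invoke the defining inner product on the Cameron--Martin space: by construction $\inner{k^{(i)}_t}{k^{(i)}_u}_{\HH_{K_i}} := R^X(t,u)$, where $R^X$ is the covariance of the \emph{process} $X$, not of the kernel. This is the crux: the Cameron--Martin norm is \emph{defined} so that the canonical representers reproduce the process covariance. Concretely, since $X_t = \int_0^t K_i(t,s)\,dM_s$ with $d\langle M\rangle_s = d\mu_s$, the covariance is $R^X(t,u) = \int_0^{t\wedge u} K_i(t,s) K_i(u,s)\,d\mu_s$, and the hypothesis~\eqref{eq:sameprocess} forces this to be the same bilinear form for $i=1$ and $i=2$ — both equal $\mathbb{E}[X_t X_u]$. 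Therefore $\norm{k^{(1)}_t}_{\HH_{K_1}}^2 = R^X(t,t) = \norm{k^{(2)}_t}_{\HH_{K_2}}^2$, which is exactly~\eqref{eq:bracketinvariance}.

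The steps in order: (1) apply Proposition~\ref{prop:voltverify} to identify $D^{K_i} X_t = k^{(i)}_t$ and to confirm the axioms hold for each triple under the Decreusefond--\"Ust\"unel conditions relative to $\mu$; (2) use causality of each kernel to show $k^{(i)}_t \in \HH^t_{K_i}$, hence $\Pi^{K_i} k^{(i)}_t = k^{(i)}_t$; (3) evaluate $\norm{k^{(i)}_t}_{\HH_{K_i}}^2$ via the defining inner product as $R^X(t,t)$; (4) observe that $R^X(t,t) = \mathbb{E}[X_t^2]$ is intrinsic to the process and independent of $i$ by~\eqref{eq:sameprocess}, concluding equality of the two brackets.

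The main obstacle I anticipate is step~(2)–(3) being subtly circular or underspecified: one must be careful that the Cameron--Martin space $\HH_{K_i}$ — defined abstractly as the completion of $\{K_i(t,\cdot)\}$ under the bilinear form $R^X$ — is genuinely the Hilbert space entering the operator triple, and that the predictable projection $\Pi^{K_i}$ used in $\langle X\rangle^{(D^{K_i},\Pi^{K_i})}$ is precisely orthogonal projection onto $\HH^t_{K_i}$ with respect to \emph{that} inner product (not some a priori different $L^2(\mu)$ structure on kernel sections). Once one commits to the construction in Section~\ref{sec:volterra} — where $\HH_X$ is the completion under $\inner{K(t,\cdot)}{K(u,\cdot)}_{\HH_X} := R^X(t,u)$ and $\Pi^X$ is orthogonal projection onto $\spn\{k_s : s\leq t\}$ — the argument is essentially immediate, since both representations feed the \emph{same} process $X$ into that construction. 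A secondary point worth a sentence is that the Decreusefond--\"Ust\"unel conditions are needed only to guarantee $X_t \in \Dom(D^{K_i})$ and that the triple satisfies the axioms (so the bracket is well-defined at all); they play no further role once $D^{K_i} X_t = k^{(i)}_t$ is established.
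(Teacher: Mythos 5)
Your proof is correct, but it takes a genuinely different route from the paper's. The paper argues via the It\^o isometry: applying it to the difference of the two representations of the same $X$ under the same driver $M$ forces $\int_0^t (K_1(t,s)-K_2(t,s))^2\,d\mu_s = 0$, hence $K_1 = K_2$ $\mu$-a.e.; the two Cameron--Martin spaces, derivations, and projections are then literally identical, and the brackets agree trivially. You instead bypass kernel uniqueness entirely and show that each bracket individually equals $R^X(t,t) = \mathbb{E}[X_t^2]$: since $D^{K_i}X_t = k^{(i)}_t$ lies in the adapted subspace (it is one of the generators of $\HH^t_{K_i}$, so the appeal to causality is not even needed), $\Pi^{K_i}k^{(i)}_t = k^{(i)}_t$ and the defining inner product of $\HH_{K_i}$ gives $\norm{k^{(i)}_t}^2_{\HH_{K_i}} = R^X(t,t)$, which depends only on the law of $X$. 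This is the same mechanism the paper uses later in the proof of Proposition~\ref{prop:Hurstinvariant}, so your argument is fully consistent with the paper's framework. What each approach buys: the paper's proof yields the stronger structural byproduct that under a fixed driver the kernel is essentially unique (so the \emph{entire} operator triple, not just the bracket, is representation-independent), whereas yours is shorter, does not need the It\^o isometry, and establishes the sharper fact that the bracket is a functional of the covariance alone --- it would survive even if the kernels could differ. Your own caveat about circularity in steps (2)--(3) is resolved exactly as you suspect: Section~\ref{sec:volterra} defines $\HH_{K_i}$ by completing the representers under the bilinear form $R^X$ and takes $\Pi^{K_i}$ to be orthogonal projection in that inner product, so no separate $L^2(\mu)$ structure intervenes.
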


\begin{proof}
By the It\^o isometry applied to~\eqref{eq:sameprocess},
\[
\int_0^t (K_1(t,s) - K_2(t,s))^2\,d\mu_s = 0 \quad \text{for all } t\in[0,T].
\]
Hence $K_1(t,s) = K_2(t,s)$ for $\mu$-a.e.\ $s\in[0,t]$, for each $t$. The Cameron--Martin spaces $\HH_{K_1}$ and $\HH_{K_2}$ are therefore isometrically identical (their defining inner products coincide on a dense set). Consequently, $D^{K_1} = D^{K_2}$, $\Pi^{K_1} = \Pi^{K_2}$, and $\norm{\Pi^{K_1}D^{K_1}X_s}^2 = \norm{\Pi^{K_2}D^{K_2}X_s}^2$ for a.e.\ $s$. Integration gives~\eqref{eq:bracketinvariance}.
\end{proof}

\begin{remark}\label{rem:fixeddriver_import}
Proposition~\ref{prop:fixeddriver} settles the primary invariance question: the intrinsic bracket is a property of the \emph{process-and-driver pair} $(X,M)$, not of the particular kernel used to represent $X$ given $M$. Any two kernels producing the same process from the same martingale yield the same bracket.
\end{remark}

\subsection{Different drivers: representation dependence and its meaning}

When the driving martingale itself changes---say $X_t = \int_0^t K(t,s)\,dW_s = \int_0^t \widetilde{K}(t,s)\,d\widetilde{W}_s$ with $(K,W)\neq(\widetilde{K},\widetilde{W})$---the intrinsic bracket may differ.

\begin{remark}[Representation dependence under different drivers]\label{rem:diffdriver}
Different driver--kernel pairs produce different operator triples $(D^K,\delta^K,\Pi^K)$ and $(D^{\widetilde{K}},\delta^{\widetilde{K}},\Pi^{\widetilde{K}})$. These triples encode different decompositions of the noise: the ``fresh-noise'' component $\Pi DX_s$ depends on which filtration (hence which driver) is used. Accordingly, the intrinsic bracket $\langle X\rangle^{(D^K,\Pi^K)}_t$ may differ from $\langle X\rangle^{(D^{\widetilde{K}},\Pi^{\widetilde{K}})}_t$.

This is not a defect. The intrinsic bracket records the energy structure of a specific noise decomposition. Different decompositions carry different physical content, and the bracket faithfully reflects this. An analogy: in differential geometry, the Christoffel symbols depend on the coordinate chart, but the curvature they encode does not. Here, the bracket depends on the representation, but the Hurst exponent it encodes does not (Proposition~\ref{prop:Hurstinvariant}).
\end{remark}

\subsection{Hurst exponent as representation-invariant scaling}

Even when the bracket itself is representation-dependent, its \emph{scaling exponent} is not.

\begin{proposition}[Invariance of the Hurst exponent]\label{prop:Hurstinvariant}
Let $X$ be a centered Gaussian process with stationary increments and covariance $R(t,s) = \tfrac{1}{2}(t^{2H}+s^{2H}-|t-s|^{2H})$. Then for any Volterra representation $(K,W)$ satisfying the Decreusefond--\"Ust\"unel conditions, the classical Hurst exponent $H$ is equivalently characterized by the intrinsic bracket scaling
\[
\langle X\rangle^{(D^K,\Pi^K)}_t \sim t^{2H} \quad \text{as } t\to 0^+,
\]
and this characterization is independent of the representation.
\end{proposition}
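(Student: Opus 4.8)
The plan is to compute $\langle X\rangle^{(D^K,\Pi^K)}_t$ explicitly for an arbitrary admissible Volterra representation and show it always equals $R(t,t) = t^{2H}$, from which the small-$t$ asymptotic is immediate and manifestly representation-independent. The key observation is that, by Proposition~\ref{prop:voltverify} (more precisely the energy-regularity verification therein), for any centered Volterra Gaussian process $X_t = \int_0^t K(t,s)\,dW_s$ with $DX_t = k_t = K(t,\cdot)$ deterministic and $\Pi^K k_t = k_t$ (since $k_t \in \HH^t_X$ by construction of the predictable projection as orthogonal projection onto $\spn\{k_s : s\le t\}$), one has
\[
\langle X\rangle^{(D^K,\Pi^K)}_t = \norm{\Pi^K D^K X_t}_{\HH_X}^2 = \norm{k_t}_{\HH_X}^2 = \inner{k_t}{k_t}_{\HH_X} = R^X(t,t).
\]
The point is that the right-hand side $R^X(t,t)$ is the covariance of $X$ evaluated on the diagonal, which is an intrinsic property of the process $X$ and does not depend on which kernel--driver pair $(K,W)$ is used to represent it. Under the stated covariance $R(t,s) = \tfrac12(t^{2H}+s^{2H}-|t-s|^{2H})$, we get $R(t,t) = t^{2H}$ exactly, so $\langle X\rangle^{(D^K,\Pi^K)}_t = t^{2H}$ for every admissible representation, and in particular $\langle X\rangle^{(D^K,\Pi^K)}_t \sim t^{2H}$ as $t\to 0^+$.

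First I would state the diagonal-energy identity $\langle X\rangle^{(D^K,\Pi^K)}_t = R^X(t,t)$, citing Proposition~\ref{prop:voltverify} for the three facts it rests on: $D^X X_t = k_t$, $\Pi^X$ fixes $k_t$, and $\norm{k_t}_{\HH_X}^2 = R^X(t,t)$. Second I would note that $R^X(t,t) = \mathbb{E}[X_t^2]$ is determined by the law of $X$ alone, hence is the same for any two Volterra representations $(K,W)$ and $(\widetilde K,\widetilde W)$ of the same process $X$ (this is where the representation-independence is actually used; it is a weaker statement than Proposition~\ref{prop:fixeddriver}, since here only the one-dimensional marginals need to agree, not the joint law with a common driver). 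Third I would substitute the given covariance to obtain $R(t,t) = \tfrac12(t^{2H}+t^{2H}-0) = t^{2H}$ and read off both the equality $\langle X\rangle^{(D^K,\Pi^K)}_t = t^{2H}$ and the asymptotic equivalence as $t\to 0^+$. Finally, for the converse direction implicit in ``equivalently characterized,'' I would remark that $H$ is recovered from the bracket via $H = \tfrac12\lim_{t\to0^+}\log\langle X\rangle^{(D^K,\Pi^K)}_t/\log t$, matching the classical Hurst exponent of the covariance, and that this limit is representation-blind precisely because the bracket equals $t^{2H}$ on the nose.

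I do not expect a serious obstacle here: the proposition is essentially a corollary of the energy-function identity $\Gamma^X(t) = R^X(t,t)$ already established in Proposition~\ref{prop:voltverify}, combined with the trivial evaluation of the fractional covariance on the diagonal. The only point requiring a word of care is making sure the predictable projection $\Pi^K$ genuinely fixes $k_t$ for every admissible kernel $K$ — but this is true by the very definition of $\Pi^K$ as orthogonal projection onto $\HH^t_X = \spn\{k_s : s\le t\}$, which contains $k_t$. A second minor point worth a sentence is that the Decreusefond--\"Ust\"unel conditions guarantee $k_t \in \HH_X$ with finite norm, so the energy function is finite and the identity is not vacuous. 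If anything, the mild subtlety is expository rather than mathematical: the statement of the proposition phrases the conclusion as an asymptotic ($\sim t^{2H}$) when in fact the exact equality holds, and I would make that explicit rather than proving only the weaker asymptotic.
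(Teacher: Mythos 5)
Your proposal is correct and follows essentially the same route as the paper's own proof: both reduce the bracket to the diagonal covariance via $\langle X\rangle^{(D^K,\Pi^K)}_t = \norm{K(t,\cdot)}_{\HH_K}^2 = R(t,t) = t^{2H}$ and conclude representation-independence because the right-hand side depends only on the law of $X$. Your added remarks (that exact equality, not merely the asymptotic, holds, and that $H$ is recoverable from the bracket by a logarithmic limit) are accurate refinements but not a different argument.
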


\begin{proof}
The covariance function satisfies $R(t,t) = \mathbb{E}[X_t^2] = t^{2H}$. By the energy function definition, $\langle X\rangle^{(D^K,\Pi^K)}_t = \Gamma^X(t) = \norm{\Pi^K D^K X_t}_{\HH_K}^2 = R(t,t) = t^{2H}$, since $\Pi^K D^K X_t = K(t,\cdot)$ and $\norm{K(t,\cdot)}_{\HH_K}^2 = R(t,t)$. This depends only on the covariance function, hence is representation-independent.
\end{proof}

\subsection{Summary of representation dependence}

We collect the results of this section for clarity.

\begin{enumerate}[label=(\roman*)]
\item \textbf{Fixed driver:} The intrinsic bracket is representation-invariant (Proposition~\ref{prop:fixeddriver}).
\item \textbf{Different drivers:} The bracket may differ, reflecting genuinely different noise decompositions (Remark~\ref{rem:diffdriver}).
\item \textbf{Scaling exponent:} The Hurst parameter $H$ is always representation-invariant (Proposition~\ref{prop:Hurstinvariant}).
\item \textbf{Fractional Brownian motion:} For fBM specifically, the bracket $\langle B^H\rangle_t = t^{2H}$ is representation-invariant because $R^H(t,t) = t^{2H}$ depends only on the covariance.
\end{enumerate}

\section{Approximation Stability}\label{sec:approx}

For analytic completeness, we establish that the second-order structure is preserved under kernel approximation.

\begin{proposition}[Approximation stability of the intrinsic bracket]\label{prop:approx}
Let $K_n$ be a sequence of Volterra kernels converging to $K$ in $L^2(\mu)$, where $\mu$ is the measure on $\{(t,s): 0\leq s\leq t\leq T\}$ given by $d\mu = ds\,dt$. Assume each $K_n$ and $K$ satisfy the Decreusefond--\"Ust\"unel conditions with uniform bounds. Let $X^{(n)}_t = \int_0^t K_n(t,s)\,dW_s$ and $X_t = \int_0^t K(t,s)\,dW_s$. Then:
\begin{enumerate}[label=(\roman*)]
\item $\langle X^{(n)}\rangle^{(D^{K_n},\Pi^{K_n})}_t \to \langle X\rangle^{(D^K,\Pi^K)}_t$ in $L^1(\Omega)$ for each $t$.
\item The It\^o formula for $X^{(n)}$ converges term-by-term to the It\^o formula for $X$.
\end{enumerate}
\end{proposition}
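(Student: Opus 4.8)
The plan is to reduce everything to the fact that, for a Volterra Gaussian process, the intrinsic bracket is \emph{deterministic}. By Proposition~\ref{prop:voltverify}, $D^{K_n}X^{(n)}_t = k^{(n)}_t = K_n(t,\cdot)$ lies in $\HH^t_{K_n}$, so $\Pi^{K_n}$ fixes it and
\[
\langle X^{(n)}\rangle^{(D^{K_n},\Pi^{K_n})}_t = \norm{k^{(n)}_t}_{\HH_{K_n}}^2 = \int_0^t K_n(t,s)^2\,ds = \mathbb{E}\bigl[(X^{(n)}_t)^2\bigr],
\]
and likewise for $X$. Thus (i) is the numerical assertion $\int_0^t K_n(t,s)^2\,ds \to \int_0^t K(t,s)^2\,ds$. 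By Cauchy--Schwarz,
\[
\Bigl|\int_0^t K_n(t,s)^2\,ds - \int_0^t K(t,s)^2\,ds\Bigr| \le \norm{K_n(t,\cdot)-K(t,\cdot)}_{L^2[0,t]}\,\norm{K_n(t,\cdot)+K(t,\cdot)}_{L^2[0,t]},
\]
and the second factor is bounded uniformly in $n$ and $t$ by the growth bound $|K_n(t,s)|\le C|t-s|^{H-1/2}$. Integrating in $t$ and applying Cauchy--Schwarz again bounds the left side in $L^1([0,T],dt)$ by a constant times $\norm{K_n-K}_{L^2(\mu)}\to 0$. The uniform Decreusefond--\"Ust\"unel bounds also make $t\mapsto\Gamma^{X^{(n)}}(t)=\mathbb{E}[(X^{(n)}_t)^2]$ equicontinuous and uniformly bounded on $[0,T]$; Arzel\`a--Ascoli together with the $L^1([0,T],dt)$ convergence just obtained promotes this to \emph{uniform} convergence $\Gamma^{X^{(n)}}\to\Gamma^X$, in particular the "for each $t$" statement (a deterministic limit, so $L^1(\Omega)$ convergence is numerical). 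This proves (i).

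For (ii), write the It\^o formula \eqref{eq:Ito} for $X^{(n)}$. The left side converges: by It\^o's isometry $\mathbb{E}[(X^{(n)}_t-X_t)^2]=\norm{K_n(t,\cdot)-K(t,\cdot)}_{L^2[0,t]}^2$, which tends to $0$ for each $t$ by the equicontinuity argument of step (i) applied to the (mean-square-continuous) function $t\mapsto\mathbb{E}[(X^{(n)}_t-X_t)^2]$, so $\norm{\varphi(X^{(n)}_t)-\varphi(X_t)}_{L^2}\le\norm{\varphi'}_\infty\norm{X^{(n)}_t-X_t}_{L^2}\to 0$ since $\varphi\in C^3_b$. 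Granting that the second-order term converges (next paragraph), the Skorokhod term converges \emph{automatically}, because \eqref{eq:Ito} is an exact $L^2(\Omega)$ identity:
\[
\delta\bigl(\varphi'(X^{(n)})\Pi^{K_n}D^{K_n}X^{(n)}\cdot\mathbf{1}_{[0,t]}\bigr)=\varphi(X^{(n)}_t)-\varphi(0)-\tfrac12\!\int_0^t\!\varphi''(X^{(n)}_s)\,d\Gamma^{X^{(n)}}(s),
\]
and every term on the right converges in $L^1(\Omega)$, forcing the limit to be $\delta(\varphi'(X)\Pi^{K}D^{K}X\cdot\mathbf{1}_{[0,t]})$ by the It\^o formula for $X$.

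It remains to treat the correction. Split
\[
\int_0^t\varphi''(X^{(n)}_s)\,d\Gamma^{X^{(n)}}(s)-\int_0^t\varphi''(X_s)\,d\Gamma^X(s)=A_n+B_n,
\]
with $A_n:=\int_0^t[\varphi''(X^{(n)}_s)-\varphi''(X_s)]\,d\Gamma^{X^{(n)}}(s)$ and $B_n:=\int_0^t\varphi''(X_s)\,d(\Gamma^{X^{(n)}}-\Gamma^X)(s)$. For $A_n$: since $\varphi'''$ is bounded and the $\Gamma^{X^{(n)}}$ have uniformly bounded total variation $V$ (a consequence of the uniform DU conditions, as in Proposition~\ref{prop:voltverify}), $\mathbb{E}|A_n|\le\norm{\varphi'''}_\infty V\sup_{s\le t}\norm{X^{(n)}_s-X_s}_{L^2(\Omega)}\to 0$. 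For $B_n$: fix a continuous modification of $X$ (available by Kolmogorov's criterion from $|K(t,s)|\le C|t-s|^{H-1/2}$), so $s\mapsto\varphi''(X_s(\omega))$ is continuous for a.e.\ $\omega$; since $\Gamma^{X^{(n)}}\to\Gamma^X$ pointwise with uniformly bounded total variation, Helly's selection theorem identifies $d\Gamma^X$ as the unique weak-$*$ limit of $d\Gamma^{X^{(n)}}$ in $(C[0,T])^*$, and continuity of $\Gamma^X$ at the endpoint $t$ gives $B_n(\omega)\to 0$ a.s.; the bound $|B_n(\omega)|\le\norm{\varphi''}_\infty(V+\mathrm{Var}(\Gamma^X))$ then yields $\mathbb{E}|B_n|\to 0$ by dominated convergence. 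Hence the correction terms converge in $L^1(\Omega)$, completing (ii).

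The main obstacle is exactly the passage to the limit in $B_n$: the bare hypothesis $K_n\to K$ in $L^2(\mu)$ controls the kernels only on average in $t$, which neither guarantees the "for each $t$" statement nor suffices to integrate the random, merely continuous integrand $\varphi''(X_\cdot)$ against a moving family of BV measures. The uniform Decreusefond--\"Ust\"unel bounds carry that weight, supplying (a) equicontinuity and uniform boundedness of the energy functions, upgrading $L^1([0,T],dt)$ convergence to uniform convergence; (b) uniform total-variation bounds, enabling Helly-type weak-$*$ compactness; and (c) pathwise continuity of $X$ via Kolmogorov. With these three inputs the rest is routine; absent them, convergence for \emph{every} $t$ would require strengthening the hypothesis to $\norm{K_n(t,\cdot)-K(t,\cdot)}_{L^2[0,t]}\to 0$ for each $t$ (which holds, for instance, for the exponential-sum approximations that motivate the statement).
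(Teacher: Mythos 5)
Your proof is correct. For part (i) it follows the same route as the paper (reduce to the deterministic statement $\int_0^t K_n(t,s)^2\,ds\to\int_0^t K(t,s)^2\,ds$, since the bracket equals $R^{X^{(n)}}(t,t)$), but it is more careful where the paper is glib: the paper asserts that $L^2(\mu)$ convergence of the kernels gives \emph{uniform} convergence of $R^{X^{(n)}}(t,t)$, whereas you correctly note that $L^2(\mu)$ convergence is only an average-in-$t$ statement and patch the gap with equicontinuity (from the uniform Decreusefond--\"Ust\"unel bounds) plus Arzel\`a--Ascoli. For part (ii) you genuinely diverge. The paper attacks the divergence term directly, citing $L^2$-continuity of $\delta$ and ``convergence of the integrand''; this is delicate because the spaces $\HH_{K_n}$ and the operators $\delta^{K_n}$ themselves vary with $n$, and the paper never says in which sense the integrands converge. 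You instead deduce convergence of the Skorokhod term from the exact $L^2(\Omega)$ identity once the other two terms are controlled --- a cleaner move that sidesteps the moving-Hilbert-space issue entirely, at the price of obtaining only $L^1(\Omega)$ convergence for that term. You also supply an argument the paper omits for the correction term: its ``the bracket term converges by (i)'' conflates convergence of $\Gamma^{X^{(n)}}(t)$ with convergence of the Stieltjes integrals $\int_0^t\varphi''(X^{(n)}_s)\,d\Gamma^{X^{(n)}}(s)$, and your $A_n/B_n$ split with uniform total-variation bounds and Helly--Bray weak-$*$ convergence is what actually makes that step rigorous. Two caveats to state explicitly rather than leave implicit: the equicontinuity of $t\mapsto\int_0^t K_n(t,s)^2\,ds$ and the uniform total-variation bound both rest on reading ``uniform bounds'' in the DU conditions as including uniform control of $\partial_t K_n$ (condition (3)); these are the same implicit assumptions the paper makes, so this is a presentational point, not a gap.
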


\begin{proof}
Part (i): By the energy function definition, $\langle X^{(n)}\rangle_t = \Gamma^{X^{(n)}}(t) = R^{X^{(n)}}(t,t)$ and $\langle X\rangle_t = \Gamma^X(t) = R^X(t,t)$, where $R^{X^{(n)}}(t,t) = \int_0^t K_n(t,r)^2\,dr$. By $L^2(\mu)$ convergence of $K_n\to K$, the covariance functions converge: $R^{X^{(n)}}(t,t)\to R^X(t,t)$ uniformly on $[0,T]$. Since the intrinsic brackets are deterministic in this setting, $L^1(\Omega)$ convergence follows.

Part (ii): The stochastic integral term $\delta^{K_n}(\varphi'(X^{(n)})\Pi^{K_n}D^{K_n}X^{(n)}\cdot\mathbf{1}_{[0,t]})$ converges by $L^2$ continuity of $\delta$ (see~\cite[Proposition~1.3.1]{Nualart}) and convergence of the integrand. The bracket term converges by (i).
\end{proof}

\begin{remark}\label{rem:Markov}
Proposition~\ref{prop:approx} gives the analytic content of kernel Markovianization: exponential-sum kernels $K_n = \sum_{j=1}^n c_j e^{-\lambda_j(t-s)}$ approximate general $K$ in $L^2(\mu)$, and the corresponding intrinsic brackets and It\^o formulas converge. This is the precise sense in which finite-dimensional approximations preserve the second-order structure.
\end{remark}

\section{Worked Example: Fractional Brownian Motion}\label{sec:example}

We demonstrate the operator It\^o formula with an explicit calculation for $H=1/4$.

\subsection{Setup}

Let $B^{1/4} = (B^{1/4}_t)_{t\in[0,1]}$ be fractional Brownian motion with Hurst parameter $H=1/4$. We apply Theorem~\ref{thm:main} with $\varphi(x) = x^2$.

\subsection{Application of It\^o formula}

Since $B^{1/4}$ is a $(D^H,\delta^H,\Pi^H)$-It\^o process with drift $a=0$, $B^{1/4}_t = \delta^H(\Pi^H D^H B^{1/4}\cdot\mathbf{1}_{[0,t]})$.

Applying Theorem~\ref{thm:main} with $\varphi(x)=x^2$, $\varphi'(x)=2x$, $\varphi''(x)=2$, and writing $k_s := K_{1/4}(s,\cdot)\in\HH_{1/4}$ for the canonical representer (Section~\ref{sec:volterra}):
\begin{equation}\label{eq:exformula}
(B^{1/4}_t)^2 = 2\delta^H(B^{1/4}\cdot k_\cdot\mathbf{1}_{[0,t]}) + \int_0^t d\Gamma^{B^{1/4}}(s).
\end{equation}

\subsection{Bracket computation}

By Lemma~\ref{lem:fBMenergy} with $H=1/4$, $\Gamma^{B^{1/4}}(t) = t^{1/2}$. The energy density is $\dot{\Gamma}(s) = \tfrac{1}{2}s^{-1/2} = 2Hs^{2H-1}$, so:
\[
\int_0^t d\Gamma^{B^{1/4}}(s) = \int_0^t \tfrac{1}{2}s^{-1/2}\,ds = t^{1/2}.
\]
Thus,
\begin{equation}\label{eq:exresult}
(B^{1/4}_t)^2 = 2\delta^H(B^{1/4}\cdot k_\cdot\mathbf{1}_{[0,t]}) + t^{1/2}.
\end{equation}

\subsection{Verification}

The classical Skorokhod integral representation for $(B^H_t)^2$ (see \cite{DU}, Example~4.2) is
$(B^H_t)^2 = 2\delta^H(B^H\cdot k_\cdot\mathbf{1}_{[0,t]}) + t^{2H}$.
For $H=1/4$, this gives $(B^{1/4}_t)^2 = 2\delta^H(B^{1/4}\cdot k_\cdot\mathbf{1}_{[0,t]}) + t^{1/2}$,
in exact agreement with~\eqref{eq:exresult}.

\begin{remark}\label{rem:automatic}
The operator It\^o formula follows automatically from the $(D^H,\delta^H,\Pi^H)$ structure. The energy function $\Gamma^{B^H}(t) = \norm{k_t}_{\HH_H}^2 = t^{2H}$ is computed from the Cameron--Martin norm, and the correction is the Stieltjes integral $\int d\Gamma = t^{2H}$. No renormalization of $\HH_H$ is needed: the correction arises from the energy \emph{increment} $d\Gamma$, not from the energy value $\Gamma$ itself.
\end{remark}

\section{Conclusion}\label{sec:conclusion}

\subsection{What the trilogy establishes}

The three papers \cite{FonI,FonII} and this work demonstrate that within the operator factorization framework, stochastic calculus takes the form $(\Id - \mathbb{E}) = \delta\Pi D$.

Paper~I \cite{FonI}: The factorization yields Clark--Ocone representations.

Paper~II \cite{FonII}: For rough fBM ($H<\tfrac{1}{2}$), the predictable component $\Pi DF$ is the Gubinelli derivative.

Paper~III (this paper): The It\^o formula for Volterra Gaussian processes is expressed in operator language via the energy function $\Gamma^X(t) = \norm{\Pi DX_t}_\HH^2$. The correction measure $d\Gamma^X$ is characterized as the unique second-order correction (Theorem~\ref{thm:characterization}). The bracket is representation-invariant under a fixed driver (Proposition~\ref{prop:fixeddriver}), and the Hurst exponent is always representation-invariant (Proposition~\ref{prop:Hurstinvariant}). The structural source of the correction is identified as the non-commutativity of $\Pi$ with nonlinear functions; the proof mechanism uses the Gaussian conditional expectation structure of Decreusefond--\"Ust\"unel~\cite{DU}.

\subsection{Framework structure}

Within this framework:

\emph{Axioms:} Adjointness ($D = \delta^*$), adaptation ($\Pi$ as orthogonal projection), factorization $(\Id-\mathbb{E})=\delta\Pi D$.

\emph{Consequences (Gaussian setting):} Semimartingale structure (when drift is smooth), quadratic variation (as $\Gamma^X(t) = \norm{\Pi DX_t}_\HH^2$), rough path derivatives (as $\Pi DF$).

\subsection{Open problems}\label{sec:open}

\begin{enumerate}[label=\arabic*.]
\item \textbf{Non-Gaussian It\^o formulas:} The proof of Theorem~\ref{thm:main} relies on Gaussianity for the conditional expectation computations (Remark~\ref{rem:proofscope}). For non-Gaussian processes admitting a $(D,\delta,\Pi)$ triple, can the It\^o formula be established by different means? What replaces the Gaussian integration-by-parts formula?

\item \textbf{Higher-order calculus:} For $H\leq\tfrac{1}{3}$, rough differential equations require second-order terms. Can the operator framework extend to $(D, D^2, \ldots)$?

\item \textbf{Optimal domain conditions:} For non-Gaussian processes, what are the minimal regularity conditions ensuring $\varphi'(Y_s)\Pi DY_s \in \Dom(\delta)$? For Volterra Gaussian processes, these are automatic (Remark~\ref{rem:domainauto}), but extending to other process classes may require explicit H\"older or integrability conditions.

\item \textbf{SPDEs:} Can operator factorization handle SPDEs with rough space-time noise?

\item \textbf{Representation independence beyond fBM:} For which classes of Volterra processes is the intrinsic bracket representation-independent under different drivers? Proposition~\ref{prop:fixeddriver} settles the fixed-driver case; characterizing when bracket invariance holds across different drivers remains open.
\end{enumerate}

\subsection*{Acknowledgments}

AI tools were used as interactive assistants for drafting, reorganizing, and generating mathematical arguments, under the author's direction. The author is responsible for all mathematical content and any errors.

\end{document}